\newcommand{\isarxiv}{}
    \patchcmd{\@maketitle}{\normalsize}{\large}{}{}
\pgfplotsset{compat=1.18}
\DeclareMathAlphabet{\mymathbb}{U}{BOONDOX-ds}{m}{n}
\numberwithin{equation}{section}
\newtheorem{theorem}{{Theorem}}[section]
\newtheorem{corollary}[theorem]{{Corollary}}
\newtheorem{proposition}[theorem]{{Proposition}}
\crefname{assumption}{Assumption}{Assumptions}
\newcommand{\reals}                  {\mathbb R}
\newcommand{\natint}                 {\mathbb N}
\newcommand{\argmin}                 {\mathop{\mathrm{argmin}}}
\newcommand{\minimize}               {\text{\upshape minimize}}
\newcommand{\maximize}               {\text{\upshape maximize}}
\newcommand{\subg}                   {\widetilde \nabla}
\DeclareMathOperator{\dom}           {{\bf dom}}                  
\DeclareMathOperator{\prox}          {{\bf prox}}                 
\definecolor{uclablue}{RGB}{39,116,174}
\definecolor{ogreen}{RGB}{60,128,49}
\definecolor{myblue}{RGB}{0,114,178}
\definecolor{mygreen}{RGB}{0,158,115}
\definecolor{myorange}{RGB}{213,94,0}
\newcommand{\eg}{{\it e.g.}}
\newcommand{\ie}{{\it i.e.}}
\newcommand{\bbN}{\mathbb{N}}
\newcommand{\cL}{\mathcal{L}}
\newcommand{\cO}{\mathcal{O}}
\newcommand{\cS}{\mathcal{S}}
\newcommand{\oline}[1]{\mkern 1.5mu\overline{\mkern-1.5mu#1}}
\renewcommand{\hbar}{\oline{h}}
\newcommand{\pbar}{\oline{p}}
\newcommand{\ubar}{\oline{u}}
\newcommand{\wbar}{\oline{w}}
\newcommand{\xbar}{\oline{x}}
\newcommand{\zbar}{\oline{z}}
\newcommand{\utilde}{\tilde{u}}
\newcommand{\xtilde}{\tilde{x}}
\renewcommand*\env@matrix[1][c]{\hskip -\arraycolsep
  \let\@ifnextchar\new@ifnextchar
  \array{*\c@MaxMatrixCols #1}}
\DeclareFontFamily{U}{ntxmia}{}
\DeclareFontShape{U}{ntxmia}{m}{it}{<-> ntxmia }{}
\DeclareFontShape{U}{ntxmia}{b}{it}{<-> ntxbmia }{}
\DeclareSymbolFont{lettersA}{U}{ntxmia}{m}{it}
\NewDocumentCommand{\varmathbb}{m}
 {
  \tl_map_inline:nn { #1 }
  {
    \use:c { varbb##1 }
  }
 }
\DeclareMathSymbol{varbb#1}{\mathord}{lettersA}{\int_eval:n { `#1+67 }}
\DeclareMathSymbol{varbbk}{\mathord}{lettersA}{169}
\newcommand{\ineqs}{I}
\newcommand{\tildeineqs}{\tilde{I}}
\newcommand{\sos}{\cS}
\newcommand{\tildesos}{\tilde{\cS}}
\newcommand{\lhs}{\text{LHS}}
\newcommand{\xdif}{v}
\newcommand{\CCP}{CCP}
\newcommand{\norm}[1]{\left\| #1 \right\|} 
\newcommand{\inner}[2]{ \left\langle #1 ,  #2 \right\rangle }
\newcommand{\set}[1]{\left\{  #1  \right\}}
\newcommand{\pr}[1]{ \left( #1 \right) }
\def\keywordsname{{\bfseries Keywords:}\enspace}
\def\keywords#1{\par\addvspace\medskipamount{\rightskip=0pt plus1cm
\def\and{\ifhmode\unskip\nobreak\fi\ $\cdot$
}\noindent\keywordsname\ignorespaces#1\par}}
\def\subclassname{{\bfseries Mathematics Subject Classification
(2010):}\enspace}
\def\subclass#1{\par\addvspace\medskipamount{\rightskip=0pt plus1cm
\def\and{\ifhmode\unskip\nobreak\fi\ $\cdot$
}\noindent\subclassname\ignorespaces#1\par}}
\title{Exact worst-case convergence rates for Douglas--Rachford and Davis--Yin splitting methods}
\author{Edward Duc Hien Nguyen%
\thanks{Department of Electrical and Computer Engineering, Rice University. Email: \texttt{en18@rice.edu}.}
\and
Jaewook J.\ Suh%
\thanks{Department of Computational Applied Mathematics and Operations Research, Rice University. Email: \texttt{\{jacksuh, shiqian.ma\}@rice.edu}.}
\and
Xin Jiang%
\thanks{Department of Industrial and Systems Engineering, University of Houston. Email: \texttt{xinjiang@uh.edu}.}
\and Shiqian Ma\footnotemark[2]}
\date{\today}
\begin{document}

\maketitle

\begin{abstract}
In this work, we aim to establish the exact worst-case convergence rates of Douglas--Rachford splitting (DRS) and Davis--Yin splitting (DYS) when applied to convex optimization problems. Both DRS and DYS have two variants as swapping the roles of the two nonsmooth convex functions in both algorithms yields different sequences of iterates. For both variants of DRS and one variant of DYS, we establish the exact worst-case convergence rates, including the constant factor, using the primal--dual gap function as the performance metric. We provide worst-case examples to verify the tightness of these rates. To the best of our knowledge, this is the first result that establishes the exact worst-case convergence rates for DRS and DYS that include the constant factor. For the other variant of DYS, we establish the best-known convergence rate and provide a concrete example indicating a discrepancy between the convergence rates of the two DYS variants.
\end{abstract}

\keywords{Convex optimization \and Proximal splitting methods \and Davis–Yin splitting \and Douglas–Rachford splitting}
\subclass{65K05 \and 68Q25 \and 90C25}

\ifdefined\isarxiv

\else
    \tableofcontents
    \newpage
\fi

\section{Introduction}

We study proximal splitting methods for solving convex optimization problems in the form
\begin{equation} \label{eq:prob-primal}
    \begin{array}{cc}
        \underset{x \in \reals^n}{\minimize} & f(x) + g(x) + h(x),
    \end{array}
\end{equation}
where $f$, $g$, and $h$ are closed convex proper (CCP) functions and $h$ is differentiable. This general problem covers a wide variety of applications in machine learning, signal and image processing, operations research, control, and other fields \cite{CKCH23,CP11b,CP16a,PB14,RY22}. A well-known method for solving problem \eqref{eq:prob-primal} is the Davis--Yin splitting (DYS) algorithm \cite[Algorithm~1]{DY17b}:
\begin{equation} \label{eq:dys-fpi} \tag{DYS}
    \begin{split}
        w^{k+1} &= \prox_{\alpha g} (y^{k}) \\
        x^{k+1} &= \prox_{\alpha f} (2w^{k+1} - y^{k} - \alpha \nabla h(w^{k+1})) \\
        y^{k+1} &= y^{k} + x^{k+1} - w^{k+1}.
    \end{split}
\end{equation}
\eqref{eq:dys-fpi} recovers several classical proximal methods when parts of problem \eqref{eq:prob-primal} vanish. For example, it reduces to the Douglas--Rachford splitting (DRS) algorithm \cite{EB92,DR56,LM79,PR55} when $h=0$, and to the forward--backward splitting (FBS) algorithm \cite{LM79,Passty79} when either $f$ or $g$ vanishes.

{One may observe that $f$ and $g$ play symmetric roles in problem \eqref{eq:prob-primal}, and thus swapping them does not alter the problem at all. Yet this symmetry does not carry over to the algorithmic level. Swapping $f$ and~$g$ in DRS and \eqref{eq:dys-fpi} generally leads to a different sequence of variables and thus a different algorithm \cite{Eckstein89,YY17}. In view of this subtle yet critical distinction, we analyze the convergence rate of the four algorithms: DYS, DRS, and their swapped versions, with the primal--dual gap function as the performance measure. Specialized analysis is needed for the swapped variants because, for the primal--dual gap function, the analysis does not directly extend to the swapped algorithm. As a consequence of our tight analysis of these algorithms, we found, perhaps surprisingly, the swapped version of \eqref{eq:dys-fpi} achieves a faster worst-case ergodic rate of convergence. This highlights the tangible impact of update order on algorithmic performance.}

\paragraph{Contributions.}
The contributions of this paper are summarized as follows.
\begin{itemize}
    \item We provide novel convergence analyses of DYS, DRS, and their swapped versions, by deriving equalities that explicitly reveal the critical inequalities used in the proof. These equalities not only imply immediately the ergodic convergence rates for the primal--dual gap function but also provide guidance for constructing worst-case examples for each algorithm.

    \item {We establish the exact worst-case rate of convergence, including the constant factor, for both variants of DRS and one variant of DYS. In these three cases, the tightness of the established rates is confirmed through worst-case examples. To the best of our knowledge, this is the first result that establishes the exact worst rate of convergence for DRS and DYS that includes the constant factor. For the other variant of DYS, we establish the best-known convergence rate and verify the discrepancy between the two DYS variants via a concrete example.} 

    \item Our analyses reveal that the swapped version of DYS converges faster than the original, whereas both variants of DRS share the same worst-case rate. This appears to be the first result that formally distinguishes between the two variants of proximal algorithms based on their convergence rates.
\end{itemize}

\paragraph{Outline.}
The rest of the paper is organized as follows. \Cref{sec:bg} reviews fundamental concepts from convex analysis and summarizes existing convergence results for DYS and DRS. In \cref{sec:drs}, we analyze the convergence of two DRS variants and establish the tightness of the results via worst-case examples. Similarly, \cref{sec:dys} presents analyses of two DYS variants, demonstrating a difference in their convergence rates via concrete examples. Finally, \cref{sec:conclusion} concludes the paper.

\section{Background material and prior work} \label{sec:bg}

In this section, we review fundamental concepts from convex optimization and introduce the notation used throughout the paper. We also present two variants of DRS and DYS, along with their known convergence results. Although both DRS and DYS were originally proposed to solve monotone inclusion problems, we focus in this paper on the seemingly more restrictive setting of convex optimization. This choice is motivated by the fact that the discrepancy in convergence rates between the two variants of DYS arises specifically within this narrower setting. In particular, the illustrative examples presented later involve only convex functions.

Throughout the paper, we use the notation $\inner{x}{y} = x^Ty$ for the standard inner product of vectors $x$ and~$y$, and $\|x\| = \inner{x}{x}^{1/2}$ for the Euclidean norm of a vector $x$. 
Also, we denote $\natint$ (resp.,~$\natint_+$) as the set of nonnegative (resp., positive) integers, \ie, $\natint \coloneqq \set{0,1,2,\dots}$ and $\natint_+ \coloneqq \set{1,2,\dots}$.

\subsection{Basic concepts and notation in convex optimization}

We follow the standard definitions in convex optimization; see, \eg, \cite{BV04,BC17,Nesterov18,Rockafellar70,RY22}. We denote the subdifferential of a convex function $f \colon \reals^n \to \reals$ as $\partial f$, defined by $\partial f(x) := \{ v \in \reals^n \mid f(y) \ge f(x) + \inner{v}{y-x} \ \text{for all} \ y \in \reals^n\}$. An element of $\partial f(x)$ is called a subgradient of $f$ at $x$, and when its choice is unambiguous, we use the shorthand $\subg f(x)$, following the notation introduced in \cite{Bertsekas11}: $\subg f(x) \in \partial f(x)$. With this, the inequality in the definition of $\partial f$ becomes
\begin{equation} \label{eq:cvx}
    f(x) - f(y) + \langle{\subg f(x)}, {y-x} \rangle \le 0 \quad \text{for all} \ y \in \dom f.
\end{equation}
The notation $\subg f$ is particularly useful in studying the proximal operator of a CCP function~$f$:
\[
    \prox_{f}(x) := \argmin_{y \in \reals^n} \{f(y) + \tfrac{1}{2} \|y - x\|^2\}.
\]
We often denote $\subg f(\prox_{\alpha f} (x)) := \tfrac{1}{\alpha}(\prox_{\alpha f} (x) - x) \in \partial f(\prox_{\alpha f} (x))$, and then the proximal operator of~$\alpha f$ (with $\alpha > 0$) can be written as:
\begin{equation} \label{eq:prox-op}
    \prox_{\alpha f} (x) = x - \alpha \subg f (\prox_{\alpha f} (x)).
\end{equation}
Since $\prox_{\alpha f}$ is well defined when $f$ is CCP, the quantity $\subg f(\prox_{\alpha f}(x))$ is uniquely determined. 

For any function $f$, its Fenchel conjugate is defined as $f^\ast(y) := \sup_x \set{\inner{y}{x} - f(x) }$. When $f$ is CCP, it holds that
\begin{equation} \label{eq:fenchel}
    f^\ast(y) = \inner{y}{x} - f(x) \qquad \Longleftrightarrow \qquad y \in \partial f(x)
    \qquad \Longleftrightarrow \qquad x \in \partial f^\ast (y),
\end{equation}
which is called Fenchel's identity, and the equality
\begin{equation} \label{eq:moreau}
    \prox_{\alpha f} (x) + \alpha \prox_{\alpha^{-1} f^\ast} (x/\alpha) = x
\end{equation}
holds for all $x$ and all $\alpha > 0$, which is known as the Moreau identity.

We say $f$ is $L$-smooth if it is differentiable and its gradient is $L$-Lipschitz continuous. When $f$ is $L$-smooth and convex, it satisfies the inequality
\begin{equation} \label{eq:smth}
    f(x) - f(y) + \inner{\nabla f(x)}{y-x} + \frac{1}{2L} \norm{\nabla f(x) - \nabla f(y)}^2 \le 0 \quad \text{for all} \ x,y \in \dom f.
\end{equation}

\paragraph{Dual problem and optimality conditions.}
The dual of problem~\eqref{eq:prob-primal} is
\begin{equation} \label{eq:prob-dual}
    \begin{array}{ll}
        \underset{u \in \reals^n}{\maximize} & -(f + h)^\ast (-u) - g^\ast(u),
    \end{array}
\end{equation}
where the conjugate $(f+h)^\ast$ is the infimal convolution of $f^\ast$ and $h^\ast$:
\[
    (f+h)^\ast (u) = \inf_w \set{f^\ast(w) + h^\ast(u-w)}.
\]
The primal--dual optimality conditions for \eqref{eq:prob-primal} and \eqref{eq:prob-dual} are
\begin{equation} \label{eq:opt-cond}
0 \in \partial f(x) + \nabla h(x) + u, \qquad 0 \in \partial g^\ast (u) - x.
\end{equation}
Throughout the paper, we assume \eqref{eq:opt-cond} is solvable, so strong duality holds between \eqref{eq:prob-primal} and \eqref{eq:prob-dual}.

We will refer to the convex--concave function
\begin{equation} \label{eq:lagrangian}
    \cL (x, u) = f(x) + h(x) + \langle u, x \rangle - g^\ast (u)
\end{equation}
as the \textit{Lagrangian} of \eqref{eq:prob-primal}. We follow the convention that $\cL(x,u) = +\infty$ if $x \notin \dom (f + h)$ and $\cL(x,u) = -\infty$ if $x \in \dom (f+h)$ and $z \notin \dom g^\ast$. The objectives in \eqref{eq:prob-primal} and the dual problem~\eqref{eq:prob-dual} can be expressed as
\[
\sup_u \cL(x,u) = f(x) + g(x) + h(x), \qquad \inf_x \cL(x,u) = -(f+h)^\ast (-u) - g^\ast(u).
\]
Solutions $x^\star$, $u^\star$ of the optimality conditions \eqref{eq:opt-cond} form a saddle point of $\cL$:
\[
\inf_x \sup_u \cL(x,u) = \sup_u \cL(x^\star, u) = \inf_x \cL(x, u^\star) = \sup_u \inf_x \cL(x,u).
\]
Then, it holds that
\[
\cL (x^\star, u) \leq \cL (x^\star, u^\star) \leq \cL (x, u^\star)
\]
for all $x \in \dom f$ and $u \in \dom g^\ast$. In particular, $\cL(x^\star, u^\star)$ is the optimal value of \eqref{eq:prob-primal} and \eqref{eq:prob-dual}, and the pair of primal--dual problems is equivalent to the saddle point problem
\begin{equation} \label{eq:prob-saddle-pt}
    \begin{array}{lll}
        \underset{x \in \reals^n}{\minimize} & \underset{u \in \reals^n}{\maximize} & \cL (x, u).
    \end{array}
\end{equation}

\subsection{Douglas--Rachford splitting algorithms}

In this section, we present two variants of DRS for solving the problem \eqref{eq:prob-primal} in the special case where $h=0$. Although these algorithms can be viewed as special cases of DYS, we present them explicitly to highlight that the discrepancy in convergence rates between the two variants of DYS arises from the presence of the smooth function $h$.

\paragraph{DRS-$gf$.}
The Douglas--Rachford splitting algorithm \cite{CP07,DR56,EB92,LM79,PR55} for solving \eqref{eq:prob-primal} with $h=0$ reads as
\begin{subequations} \label{eq:drs-fpi}
    \begin{align}
        w^{k+1} &= \prox_{\alpha g} (y^{k}) \\
        x^{k+1} &= \prox_{\alpha f} (2w^{k+1} - y^{k}) \\
        y^{k+1} &= y^{k} + x^{k+1} - w^{k+1},
    \end{align}
\end{subequations}
where $\alpha > 0$ is an algorithm parameter. Then, we define $u^{k+1} = \prox_{\alpha^{-1} g^\ast} (\tfrac{1}{\alpha}y^{k})$ and eliminate $w^{k+1}$ and $y^{k+1}$ to obtain
\begin{equation} \label{eq:drs-gf} \tag{DRS-$gf$}
    \begin{split}
        u^{k+1} &= \prox_{\alpha^{-1} g^\ast} (u^{k} + \tfrac{1}{\alpha} x^{k}) \\
        x^{k+1} &= \prox_{\alpha f} (x^{k} - \alpha (2 u^{k+1} - u^{k})),
    \end{split}  
\end{equation}
which we will refer to as DRS-$gf$, as it calls $g$ first and then $f$. The presented form \eqref{eq:drs-gf} solves the saddle point problem \eqref{eq:prob-saddle-pt} with $h=0$ and has been studied in, \eg, \cite[Eq.~(5.18)]{CKCH23} and \cite[\S3]{JV23}. Its convergence has been analyzed under various regularity conditions and here we focus on the most basic setting where both $f$ and $g$ are CCP functions. In this case, convergence of the iterates follows readily, as \eqref{eq:drs-fpi} can be interpreted as an instance of the proximal point method \cite{EB92}. Specifically, the iterates $(x^k, w^k, y^k)$ generated by \eqref{eq:drs-fpi} converge to $(x^\star, x^\star, x^\star + \alpha u^\star)$ and the iterates $(x^k,u^k)$ generated by \eqref{eq:drs-gf} converge to $(x^\star, u^\star)$. In addition, \eqref{eq:drs-gf} converges at a sublinear rate. More specifically, the non-ergodic sequence in \eqref{eq:drs-fpi} exhibits the following rate \cite[Theorem 4]{DY17c}, \cite{HY15}:
\begin{equation} \label{eq:drs-conv-o}
|f(x^K) + g(w^K) - f(x^\star) - g(x^\star)| = o(1/\sqrt{K+1}),
\end{equation}
while the ergodic sequence $(\xbar^K, \wbar^K)$ converges at a faster $\cO(1/(K+1))$ rate \cite[Theorem 3]{DY17c}:
\begin{equation} \label{eq:drs-conv-O}
|f(\xbar^K) + g(\wbar^K) - f(x^\star) - g(x^\star)| = \cO(1/(K+1)),
\end{equation}
where given $K \in \natint_+$, we define $\zbar^K \coloneqq \frac{1}{K} \sum_{k=1}^{K} z^k$ for $z \in \{x,w,u\}$. These rates are shown to be tight up to a constant \cite{DY17c}. Moreover, the $\cO(1/(K+1))$ ergodic rate remains valid when the primal--dual gap function is used \cite{BHG24,CP16b,JV23}.

\paragraph{DRS-$fg$.}
Since \eqref{eq:drs-fpi} is not symmetric in $f$ and $g$, exchanging $f$ and $g$ yields a different algorithm 
\begin{equation*}
\begin{split}
    x^{k+1} &= \prox_{\alpha f} (y^{k}) \\
    w^{k+1} &= \prox_{\alpha g} (2x^{k+1} - y^{k}) \\
    y^{k+1} &= y^{k} + w^{k+1} - x^{k+1}.
\end{split}
\end{equation*}
Letting $u^{k+1} = \alpha^{-1} (2x^{k+1} - y^k - w^{k+1})$ and applying Moreau identity \eqref{eq:moreau}, we can eliminate $w^{k+1}$ and $y^{k+1}$ and obtain an equivalent algorithm
\begin{equation} \label{eq:drs-fg} \tag{DRS-$fg$}
    \begin{split}
        x^{k+1} &= \prox_{\alpha f} (x^{k} - \alpha u^{k}) \\
        u^{k+1} &= \prox_{\alpha^{-1} g^\ast} (u^{k} + \tfrac{1}{\alpha}(2x^{k+1} - x^{k})),
    \end{split}
\end{equation}
which we will refer to as DRS-$fg$.

In general, \eqref{eq:drs-gf} and \eqref{eq:drs-fg} generate different iterates \cite{YY17} and are thus considered \textit{not equivalent}. Nevertheless, the convergence of \eqref{eq:drs-fg} iterates can be established via similar arguments, as \eqref{eq:drs-fg} can also be interpreted as an instance of the proximal point method with a different splitting strategy \cite{Eckstein89,EB92}. Moreover, since the objective gap is invariant in the order of $f$ and $g$, the convergence rate results \eqref{eq:drs-conv-o} and \eqref{eq:drs-conv-O} carry over directly to \eqref{eq:drs-fg}. In contrast, the primal--dual gap function treats $f$ and $g$ asymmetrically, so the convergence guarantees for \eqref{eq:drs-gf} established in \cite{BHG24,CP16b} do not automatically extend to \eqref{eq:drs-fg}. As in this paper we adopt the primal--dual gap function as the performance measure (see \cref{sec:metric} for further discussion), our analysis for \eqref{eq:drs-fg} must proceed separately, even though both algorithms ultimately exhibit the same worst-case rate.

\subsection{Davis--Yin splitting algorithms}

\paragraph{DYS-$gf$.}
The iterations \eqref{eq:dys-fpi} were first presented in \cite{DY17b} and are now referred to as the Davis--Yin splitting algorithm in the literature. Again, we introduce $u^{k+1} = \prox_{\alpha^{-1} g^\ast} (\tfrac{1}{\alpha}y^{k})$ and apply Moreau identity \eqref{eq:moreau} to eliminate $w^{k+1}$ and $y^{k+1}$:
\begin{equation} \label{eq:dys-gf} \tag{DYS-$gf$}
    \begin{split}
        u^{k+1} &= \prox_{\alpha^{-1} g^\ast} (u^{k} + \tfrac{1}{\alpha}x^{k}) \\
        x^{k+1} &= \prox_{\alpha f} (x^{k} - \alpha (2u^{k+1} - u^{k}) - \alpha \nabla h(x^{k} + \alpha (u^{k} - u^{k+1}))),
    \end{split}
\end{equation}
which we will refer to as DYS-$gf$. This form solves the saddle point problem \eqref{eq:prob-saddle-pt} and when $h=0$, \eqref{eq:dys-gf} reduces to \eqref{eq:drs-gf}. DYS was originally introduced as a fixed-point iteration scheme for solving monotone inclusion problems, so its convergence rate is often measured by the fixed-point residual; see, \eg, \cite{DY17b}. As in our discussion of DRS, we focus on the convex optimization setting under minimal assumptions, where $f$ and~$g$ are merely CCP functions. While convergence of the DYS iterates is well established \cite[\S4.1]{DY17b}, the convergence rate of \eqref{eq:dys-gf} (under the general convex setting) has received less attention. In existing works \cite{SCMR22,PG18,YL24,ZHT19}, DYS is typically viewed as a primal--dual proximal method, with the gap function used as the performance measure, and an $\cO(1/K)$ ergodic convergence rate is typically established. Yet the tightness of these rates is not discussed. Finally, we note that numerous variants of DYS have been proposed, including stochastic DYS \cite{YVC16,YGS21,ZC18,ZHT19}, inexact DYS \cite{ZTC18}, adaptive DYS \cite{PG18}, and inertial DYS \cite{CTY19}. Some of these consider settings different from ours and hence not discussed in detail.

\paragraph{DYS-$fg$.}
As for \eqref{eq:drs-gf}, we switch the role of $f$ and $g$ in \eqref{eq:dys-fpi} and obtain a different algorithm
\begin{equation} \label{eq:dys-fg} \tag{DYS-$fg$}
    \begin{split}
        x^{k+1} &= \prox_{\alpha f} (x^{k} - \alpha (u^{k} + \nabla h(x^{k}))) \\
        u^{k+1} &= \prox_{\alpha^{-1} g^\ast} (u^{k} + \tfrac{1}{\alpha} (2x^{k+1} - x^{k} + \alpha \nabla h(x^{k}) - \alpha \nabla h(x^{k+1}))),
    \end{split}
\end{equation}
which we will refer to as DYS-$fg$. When $h=0$, \eqref{eq:dys-fg} reduces to \eqref{eq:drs-fg}. This form \eqref{eq:dys-fg} was first presented as a special case of the PD3O algorithm \cite{Yan18}; see also \cite[Eq.~(8.6)]{CKCH23} and \cite[\S3]{JV23}. As noted earlier, \eqref{eq:drs-gf} and \eqref{eq:drs-fg} are not equivalent; nor are \eqref{eq:dys-gf} and \eqref{eq:dys-fg}. Consequently, the results in \cite{DY17b,PG18,YL24,ZHT19} for \eqref{eq:dys-gf} do not directly extend to \eqref{eq:dys-fg}, as they adopt performance measures that treat $f$ and $g$ asymmetrically. An $\cO(1/K)$ ergodic rate for \eqref{eq:dys-fg} is established in \cite[Theorem~2]{Yan18} (using a slightly different gap function, which starts at iteration $0$ rather than $1$) and \cite[Theorem~4]{JV23}, though the tightness of these rates is not addressed.

\subsection{Discussion on performance measures} \label{sec:metric}

Unlike gradient-based methods, the choice of the performance measure in DRS and DYS is somewhat arbitrary and often dictated by the analysis techniques employed. It is difficult to determine which metric is most natural or informative, as each comes with its own advantages and limitations. For example, the objective gap $|f(x^K) + g(w^K) - f(x^\star) - g(x^\star)|$, used in the seminal work \cite{DY17c}, does not capture the fact that $x^K$ and $w^K$ in \eqref{eq:drs-fpi} are two different sequences, and therefore $f(x^K) + g(w^K)$ does not correspond to the objective value of an iterate. Moreover, the objective value $f(x^K) + g(x^K)$ can be $+\infty$ as $x^K$ may not be in the domain of $g$. So the use of $|f(x^K) + g(x^K) - f(x^\star) - g(x^\star)|$ is valid only with some additional conditions, for instance, $g$ is additionally locally Lipschitz \cite{DY17c}.

Our analysis focuses on the convergence of the algorithms when applied to solving the saddle point problem~\eqref{eq:prob-saddle-pt} and studies the primal--dual gap function
\begin{equation}    \label{eq:gap-func}
    \cL (\xbar^K, u) - \cL(x, \ubar^K) = f(\xbar^K) + h(\xbar^K) + \langle u, \xbar^K \rangle - g^\ast (u) - \big(f(x) + h(x) + \langle \ubar^K, x \rangle - g^\ast(\ubar^K) \big).
\end{equation}
More precisely, the performance measure we choose is
\begin{equation} \label{eq:gap}
\sup_{\substack{x \in \dom f \\ u \in \dom g^\ast}} \set{\cL (\xbar^K, u) - \cL(x, \ubar^K)},
\end{equation}
which has been used extensively in the analysis of primal--dual proximal methods; see, \eg, \cite{Condat13,CP11a,CP16b,JV23,SCMR22,Yan18}. One shall note that the quantity $\cL(\xbar^K, u^\star) - \cL(x^\star, \ubar^K)$ is not a valid performance measure: while this quantity is $0$ when $(\xbar^K, \ubar^K)$ is a saddle point, the converse is not necessarily true \cite{CP11a}. So the pointwise supremum over $(x,u)$ is necessary in~\eqref{eq:gap}.

\section{Convergence analysis of two variants of DRS} \label{sec:drs}

In this section, we analyze the convergence of \eqref{eq:drs-gf} and \eqref{eq:drs-fg} using the primal--dual gap function \eqref{eq:gap} as the performance measure. To demonstrate the tightness of our results, we construct worst-case examples for which the two variants of DRS converge at \textit{exactly} the established rates. Although the worst-case examples differ only by a sign, the analyses of \eqref{eq:drs-gf} and \eqref{eq:drs-fg} must be carried out separately. This separation also facilitates a clear comparison with the analyses of \eqref{eq:dys-gf} and \eqref{eq:dys-fg} in \cref{sec:dys}, whose rates, perhaps surprisingly, do not coincide.

\subsection{\texorpdfstring{DRS-$gf$}{DRS-gf}: worst-case rate and its tightness} \label{sec:drs-gf}

We begin our analysis of \eqref{eq:drs-gf} with another equivalent reformulation
\begin{subequations} \label{eq:drs-gf-reform}
    \begin{align}
        u^{k+1} &= u^k + \tfrac{1}{\alpha} x^k - \tfrac{1}{\alpha} \subg g^\ast (u^{k+1}) \label{eq:drs-gf-reform-u} \\
        x^{k+1} &= x^{k} - \alpha (2 u^{k+1} - u^{k}) - \subg f (x^{k+1}), \label{eq:drs-gf-reform-x}
    \end{align}
\end{subequations}
where we used \eqref{eq:prox-op}. For later use, we also introduce the notation
\begin{equation} \label{eq:drs-gf-p}
    p^{k} := \subg g^\ast (u^{k}) =  \alpha u^{k-1} + x^{k-1} - \alpha u^{k},
\end{equation}
which implies from \eqref{eq:fenchel} that $u^{k+1} \in \partial g(p^{k+1})$. We will see that how the use of $\subg$ in reformulating \eqref{eq:drs-gf}, inspired by \cite{Bertsekas11,DY17b}, facilitates the analysis of \eqref{eq:drs-gf}.

Similarly, for the ergodic iterate $\ubar^K$ and an arbitrary $u \in \reals^n$, we denote their subgradient of $g^\ast$ as $\pbar^K$ and~$p$, respectively:
\begin{equation}  \label{eq:drs-gf-pbar}
    \pbar^K \in \partial g^\ast (\ubar^K), \qquad p \in \partial g^\ast (u).
\end{equation}
One shall be aware that $\pbar^K$ is a subgradient of $g^\ast$ at $\ubar^K$ rather than the average of $\{p^k\}_{k=1}^K$. It then follows from \eqref{eq:fenchel} that $\ubar^K \in \partial g (\pbar^K)$, $u \in \partial g(p)$, and  
\[
    g^\ast(\ubar^K) = \langle \ubar^K, \pbar^K \rangle - g(\pbar^K), \qquad
    g^\ast(u) = \langle u, p \rangle - g(p).
\]
The introduction of $\pbar^K$ and $p$ helps reformulate the primal--dual gap function:
\begin{equation} \label{eq:drs-gf-gap}
    \cL(\xbar^K, u) - \cL(x, \ubar^K) = f(\xbar^K) + \langle u, \xbar^K \rangle - \langle u, p \rangle + g(p) - \big(f(x) + \langle \ubar^K, x \rangle - \langle \ubar^K, \pbar^K \rangle + g(\pbar^K) \big).
\end{equation}

We now derive an equality that will play an important role in obtaining the tight convergence rate of \eqref{eq:drs-gf}. Its development is motivated by a computer-aided analysis framework known as the performance estimation problem (PEP) \cite{DT14,THG17a}.
\begin{proposition} \label{prop:drs-gf-conv}
    Suppose $f$ and $g$ are \CCP\ functions, and $\{(x^k, u^k)\}_{k\in\bbN}$ is generated by \eqref{eq:drs-gf} with step size $\alpha>0$. Denote $p^k$ as in \eqref{eq:drs-gf-p}, and $\pbar^K$, $p$ as in \eqref{eq:drs-gf-pbar}. For $K \in \natint_+$, define the ergodic iterates
    \begin{equation} \label{eq:ergodic}
        \xbar^K \coloneqq \frac{1}{K} \sum_{k=1}^{K} x^k, \qquad 
        \ubar^K \coloneqq \frac{1}{K} \sum_{k=1}^{K} u^k.
   \end{equation}
    Then, for all $K \in \natint_+$ and all $x, u \in \reals^n$, the equality
    \begin{equation} \label{eq:drs-gf-conv-eq}
        \cL(\xbar^K,u) - \cL(x,\ubar^K) - \frac{1}{\alpha (K+1)} \pr{\|x^0 - x\|^2 + \alpha^2 \|u^0 - u\|^2} = \ineqs_f + \ineqs_g - \sos_1 - \sos_2
    \end{equation}
    holds, 
    where we denote 
    \begin{equation} \label{eq:drs-gf-conv-xdif}
        \xdif^k = \subg{f}(x^k) + u^k
    \end{equation}
    and    
    \begin{equation} \label{eq:drs-gf-conv-ineqs-squares}
        \begin{aligned}
            \ineqs_f &= \frac{1}{K} \sum_{k=1}^{K} \pr{ f ( \bar{x}^K ) - f(x^k) + \inner{\subg{f} ( \bar{x}^K )}{x^k - \bar{x}^K} } 
             +  \frac{1}{K} \sum_{k=1}^{K}  \pr{ f(x^{k}) - f(x) + \inner{\subg{f}(x^{k})}{x - x^{k}} } \\
             \ineqs_g &=  \frac{1}{K} \sum_{k=1}^{K} \pr{ g(p^k) - g( \pbar^K ) + \inner{u^k}{\pbar^K - p^k} } 
             +  \frac{1}{K} \sum_{k=1}^{K}  \pr{ g(p) - g(p^{k}) + \inner{u}{p^{k}-p} } \\
             \sos_1 &= \frac{1}{\alpha (K+1)} \pr{ \norm{ x^0 - x - \frac{\alpha (K+1)}{2K} \sum_{k=1}^{K} \xdif^k }^2 + \alpha^2 \norm{ u^0 - u - \frac{K+1}{2K} \sum_{k=1}^{K} \xdif^k }^2 } \\
             \sos_2 &= \frac{\alpha}{2K^2} \sum_{k=1}^{K} \sum_{l=1}^{k-1} \norm{ \xdif^k - \xdif^l }^2.
        \end{aligned}
    \end{equation}
\end{proposition}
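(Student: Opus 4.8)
The plan is to establish the identity \eqref{eq:drs-gf-conv-eq} by direct expansion, after which the claimed rate follows immediately, since $\ineqs_f,\ineqs_g \le 0$ by convexity \eqref{eq:cvx} and $\sos_1,\sos_2 \ge 0$. The first observation is that $\ineqs_f$ and $\ineqs_g$ each split into a \emph{function-value part} and a \emph{bilinear part}. Summing the function values in $\ineqs_f$ gives $\tfrac1K\sum_k(f(\xbar^K)-f(x^k)) + \tfrac1K\sum_k(f(x^k)-f(x)) = f(\xbar^K)-f(x)$, and likewise the function values in $\ineqs_g$ collapse to $g(p)-g(\pbar^K)$. These are exactly the function-value terms of the gap reformulation \eqref{eq:drs-gf-gap}, so they cancel, and it suffices to prove a purely quadratic identity: the bilinear part of \eqref{eq:drs-gf-gap} minus the bilinear parts of $\ineqs_f+\ineqs_g$ equals $\tfrac{1}{K+1}\initdist{x}{u} - \sos_1 - \sos_2$.

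To evaluate the left-hand side of this quadratic identity I would first simplify using $\tfrac1K\sum_k x^k = \xbar^K$ and $\tfrac1K\sum_k u^k = \ubar^K$: the first sum in $\ineqs_f$ contributes $\inner{\subg f(\xbar^K)}{\xbar^K-\xbar^K}=0$, while the first sum in $\ineqs_g$ supplies $\inner{\ubar^K}{\pbar^K}$, which cancels the matching term in \eqref{eq:drs-gf-gap}. Substituting $\subg f(x^k)=\xdif^k-u^k$ from \eqref{eq:drs-gf-conv-xdif} into what remains leaves an expression in $x^k,u^k,p^k,x,u$ and the initial points.

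The crux is an algebraic relation furnished by the algorithm. Writing \eqref{eq:drs-gf-reform-x} at index $k-1$ and using $\subg f(x^k)=\xdif^k-u^k$ gives $(x^k-x^{k-1})+\alpha(u^k-u^{k-1}) = -\alpha\xdif^k$; combined with the definition \eqref{eq:drs-gf-p} of $p^k$ this yields the key identities $x^k-p^k = -\alpha\xdif^k$ and, by telescoping, $x^k+\alpha u^k = x^0+\alpha u^0 - \alpha\sum_{j=1}^k \xdif^j$. These collapse every bilinear term into expressions involving only $\xdif^k$, $x^0-x$, and $u^0-u$: the $u$-dependent terms become $\tfrac1K\inner{u}{\sum_k(x^k-p^k)} = -\tfrac{\alpha}{K}\inner{u}{\sum_k\xdif^k}$, the $x$-dependent terms become $-\tfrac1K\inner{x}{\sum_k\xdif^k}$, and the remaining terms coupling the iterates reduce, via $\tfrac1K\sum_k\inner{\xdif^k}{x^k+\alpha u^k}$, to $\tfrac1K\inner{x^0+\alpha u^0}{\sum_k\xdif^k} - \tfrac{\alpha}{2K}\sum_k\norm{\xdif^k}^2 - \tfrac{\alpha}{2K}\norm{\sum_k\xdif^k}^2$.

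Finally, I would expand $\tfrac{1}{K+1}\initdist{x}{u} - \sos_1 - \sos_2$ in the same variables. Completing the square in $\sos_1$ via $\norm{a}^2-\norm{a-b}^2 = 2\inner{a}{b}-\norm{b}^2$ and rewriting $\sos_2$ through the pairwise identity $\sum_{k=1}^K\sum_{l=1}^{k-1}\norm{\xdif^k-\xdif^l}^2 = K\sum_k\norm{\xdif^k}^2 - \norm{\sum_k\xdif^k}^2$ reduces the right-hand side to precisely the expression obtained above, establishing \eqref{eq:drs-gf-conv-eq}. The main obstacle is not any single computation but the bookkeeping required to organize the quadratic remainder so that it factors exactly as $\sos_1+\sos_2$; in particular, the $K+1$ normalization forces the specific coefficients $\tfrac{\alpha(K+1)}{2K}$ and $\tfrac{K+1}{2K}$ inside $\sos_1$, and discovering this correct grouping is exactly what the PEP viewpoint \cite{DT14,THG17a} is used for.
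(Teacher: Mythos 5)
Your proposal is correct and follows essentially the same route as the paper's proof: the same elimination identity $p^k = x^k + \alpha \xdif^k$ drawn from \eqref{eq:drs-gf-reform-x} and \eqref{eq:drs-gf-p}, the same telescoping relation $x^k + \alpha u^k = x^0 + \alpha u^0 - \alpha\sum_{j\le k}\xdif^j$, the same expansion of $\sos_1$, and the same pairwise-difference identity for $\sos_2$; only the bookkeeping differs (you cancel the function-value parts up front and verify a purely quadratic identity, while the paper simplifies $(\ineqs_f+\ineqs_g-\sos_1)-\lhs$ term by term and shows it equals $\sos_2$). All the intermediate reductions you state check out, so no gaps.
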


Before proving \cref{prop:drs-gf-conv}, we present its two immediate implications. More specifically, \cref{prop:drs-gf-conv} is used to establish an ergodic convergence rate of \eqref{eq:drs-gf} (see \cref{thm:drs-gf-conv}) and also helpful in building a worst-case example (see \cref{cor:drs-gf-conv}).
\begin{theorem}[Convergence of \eqref{eq:drs-gf}] \label{thm:drs-gf-conv}
    Suppose $f$ and $g$ are \CCP\ functions, and $\{(x^k, u^k)\}_{k\in\bbN}$ is generated by \eqref{eq:drs-gf} with step size $\alpha>0$. Then, for all $K \in \natint_+$, the averaged iterates $(\xbar^K, \ubar^K)$ defined in~\eqref{eq:ergodic} satisfy
    \begin{equation} \label{eq:drs-gf-conv-gap}
        \cL(\xbar^K,u) - \cL(x,\ubar^K) \le \frac{1}{\alpha (K+1)} \pr{\|x^0 - x\|^2 + \alpha^2 \|u^0 - u\|^2}
    \end{equation}
    for all $x \in \dom f$ and all $u \in \dom g^\ast$.
\end{theorem}
\begin{proof}
    It follows from \eqref{eq:cvx} and the convexity of $f$ and $g$ that $\ineqs_f$ and $\ineqs_g$ defined in \eqref{eq:drs-gf-conv-ineqs-squares} are nonpositive. Moreover, the two quantities $\sos_1$ and $\sos_2$ are nonnegative since they are sums of squared terms.
    The desired conclusion \eqref{eq:drs-gf-conv-gap} then follows directly from \cref{prop:drs-gf-conv}.
\end{proof}

Moreover, the inequality \eqref{eq:drs-gf-conv-gap} holds with equality if and only if the four quantities defined in~\eqref{eq:drs-fg-conv-ineqs-squares} are all zero. This result is formalized in \cref{cor:drs-gf-conv} and will be used to build a worst-case example that demonstrates the tightness of the convergence rate \eqref{eq:drs-gf-conv-gap}.  
\begin{corollary} \label{cor:drs-gf-conv}
    Let $x \neq x^0$ and $u \neq u^0$. Under the same setting as in \cref{thm:drs-gf-conv}, the inequality~\eqref{eq:drs-gf-conv-gap} holds with equality if and only if the four quantities $\ineqs_f$, $\ineqs_g$, $\sos_1$, and $\sos_2$ are all zero.
\end{corollary}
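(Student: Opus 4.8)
The plan is to obtain the equivalence directly from the exact identity \eqref{eq:drs-gf-conv-eq} of \cref{prop:drs-gf-conv}, together with the sign information already extracted in the proof of \cref{thm:drs-gf-conv}. The point of departure is that \eqref{eq:drs-gf-conv-eq} writes the \emph{signed} gap
\[
    \cL(\xbar^K,u) - \cL(x,\ubar^K) - \frac{\initdist{x}{u}}{K+1} = \ineqs_f + \ineqs_g - \sos_1 - \sos_2
\]
as an exact combination of the four quantities, carrying no hidden slack. Thus the bound \eqref{eq:drs-gf-conv-gap} is met with equality exactly when the right-hand side above vanishes, which reduces the corollary to characterizing when $\ineqs_f + \ineqs_g - \sos_1 - \sos_2 = 0$.

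I would then recall the four sign facts established in \cref{thm:drs-gf-conv}: applying the convexity inequality \eqref{eq:cvx} termwise gives $\ineqs_f \le 0$ and $\ineqs_g \le 0$, while $\sos_1$ and $\sos_2$ are nonnegative since, by \eqref{eq:drs-gf-conv-ineqs-squares}, each is a weighted sum of squared Euclidean norms. Hence the four summands $\ineqs_f$, $\ineqs_g$, $-\sos_1$, and $-\sos_2$ are all nonpositive.

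The crux is then the elementary observation that a finite sum of nonpositive reals is zero if and only if every summand is zero. Applying this to $\ineqs_f + \ineqs_g + (-\sos_1) + (-\sos_2) = 0$ forces $\ineqs_f = \ineqs_g = \sos_1 = \sos_2 = 0$; the reverse implication is immediate, since all four vanishing makes the right-hand side of the identity zero and hence turns \eqref{eq:drs-gf-conv-gap} into an equality. This settles both directions of the ``if and only if.''

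I do not anticipate a substantive obstacle; the work is essentially sign bookkeeping, and the only point meriting a remark is the role of the hypotheses $x \neq x^0$ and $u \neq u^0$. These do not enter the argument above and are not needed for the equivalence itself; rather, they guarantee $\initdist{x}{u} > 0$ in \eqref{eq:initial_distance} (recall $\alpha > 0$), so that the equality case corresponds to the \emph{strictly positive} value $\initdist{x}{u}/(K+1)$. This is precisely what is required in the sequel, where the characterization is used to engineer a worst-case instance whose primal--dual gap attains the bound nontrivially rather than collapsing to the degenerate value~$0$.
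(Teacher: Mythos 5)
Your proof is correct and takes essentially the same route as the paper's: invoke the exact identity \eqref{eq:drs-gf-conv-eq} from \cref{prop:drs-gf-conv}, recall the sign facts $\ineqs_f, \ineqs_g \le 0$ and $\sos_1, \sos_2 \ge 0$ from the proof of \cref{thm:drs-gf-conv}, and conclude that a finite sum of nonpositive terms vanishes if and only if every term does. Your closing observation that the hypotheses $x \neq x^0$ and $u \neq u^0$ are not needed for the equivalence but only guarantee $\initdist{x}{u} > 0$ (so the equality case is nondegenerate for the worst-case construction) is accurate and is a point the paper leaves implicit.
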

\begin{proof}
    Recall from the proof of \cref{thm:drs-gf-conv} that $\ineqs_f$ and $\ineqs_g$ are nonpositive and $\sos_1$ and $\sos_2$ are nonnegative. This implies that $\ineqs_f + \ineqs_g - \sos_1 - \sos_2 = 0$ if and only if each term is zero. So, the desired conclusion follows directly from \cref{thm:drs-gf-conv}.
\end{proof}

Now, we prove \cref{prop:drs-gf-conv}. 
\begin{proof}[Proof of \cref{prop:drs-gf-conv}]
    It follows from \eqref{eq:drs-gf-gap} that the left-hand side of \eqref{eq:drs-gf-conv-eq} equals
    \begin{equation} \label{eq:drs-gf-lhs}
        \begin{aligned}
         \lhs &= f(\xbar^K) + \langle u, \xbar^K \rangle  + g(p) - \langle u, { p } \rangle 
         - \pr{ f(x) + \langle \ubar^K, x \rangle  + g(\pbar^K) - \langle \ubar^K, { \pbar^K } \rangle }  \\ &\phantom{=}
          - \frac{1}{\alpha (K+1)} \pr{ \|x^0 - x\|^2 + \alpha^2 \|u^0 - u\|^2 }.
        \end{aligned}
    \end{equation}
    To establish the identity \eqref{eq:drs-gf-conv-eq}, we simplify the four terms in \eqref{eq:drs-gf-conv-ineqs-squares} one by one. 
    For $\ineqs_f$, we observe that
    \begin{equation*}
        \frac{1}{K} \sum_{k=1}^{K} \inner{\subg{f} ( \bar{x}^K )}{x^k - \bar{x}^K} 
        =  \inner{\subg{f} ( \bar{x}^K )}{ \frac{1}{K} \sum_{k=1}^{K} x^k - \bar{x}^K} 
        = 0.
    \end{equation*}
    Note that the $f(x^k)$ terms cancel out, so $\ineqs_f$ simplifies to
    \begin{equation} \label{eq:drs-gf-ineqs-f}
        \ineqs_f = f(\xbar^K) - f(x) + \inner{ \frac{1}{K} \sum_{k=1}^{K} \subg{f}(x^k) }{x} - \frac{1}{K} \sum_{k=1}^{K} \inner{ \subg{f}(x^k) }{ x^k },
    \end{equation}
    in which the first two terms appear in the $\lhs$. 
    
    For $\ineqs_g$, we apply $\frac{1}{K} \sum_{k=1}^{K} \inner{u^k}{\pbar^{K}} = \inner{\ubar^K}{\pbar^{K}}$ and obtain
    \begin{equation} \label{eq:drs-gf-ineqs-g-1}
        \ineqs_g = g(p) - g(\pbar^K) - \inner{u}{p} + \inner{\ubar^K}{\pbar^{K}}  + \inner{u}{ \frac{1}{K} \sum_{k=1}^{K}  p^k} - \frac{1}{K} \sum_{k=1}^{K} \inner{u^k}{p^k}.
    \end{equation}
    Then it follows from \eqref{eq:drs-gf-reform-x}, \eqref{eq:drs-gf-p}, and \eqref{eq:drs-gf-conv-xdif} that 
    \begin{equation} \label{eq:drs-gf-eliminate-p}
        p^{k} 
        = x^{k} + \alpha \pr{ \subg f(x^{k}) +  u^{k}}
        = x^{k} + \alpha \xdif^k.
    \end{equation}
    Substituting back in \eqref{eq:drs-gf-ineqs-g-1} eliminates $\{p_k\}_{k=1}^K$ and thus $\ineqs_g$ becomes
    \begin{equation} \label{eq:drs-gf-ineqs-g-2}
        \ineqs_g = g(p) - g(\pbar^K) - \inner{u}{p} + \inner{\ubar^K}{\pbar^{K}}  + \inner{u}{\bar{x}^k} + \alpha \inner{u}{ \frac{1}{K} \sum_{k=1}^{K} v^k} - \frac{1}{K} \sum_{k=1}^{K} \inner{u^k}{x^{k} + \alpha \xdif^k}.
    \end{equation}
    Note that all terms appear in the \lhs\ except the last two terms.
    
    Similarly, for $\sos_1$, we expand and re-organize the squares and obtain 
    {\small
    \begin{equation*}
        \sos_1
        = \frac{1}{\alpha (K+1)} \pr{ \|x^0 - x\|^2 + \alpha^2 \|u^0 - u\|^2 } 
        - \inner{ x^0 + \alpha u^0 - x - \alpha u }{ \frac{1}{K} \sum_{k=1}^{K} \xdif^k }
        + \frac{\alpha (K+1)}{2K^2} \norm{ \sum_{k=1}^{K} \xdif^k }^2.
    \end{equation*}
    }
    Then, it is straightforward to verify that
    \begin{align}
        \MoveEqLeft[0.2] (\ineqs_f + \ineqs_g - \sos_1) - \lhs \nonumber \\
        &= \inner{ \frac{1}{K} \sum_{k=1}^{K} \subg{f}(x^k) }{x} - \frac{1}{K} \sum_{k=1}^{K} \inner{ \subg{f}(x^k) }{ x^k } 
        + \alpha \inner{u}{ \frac{1}{K} \sum_{k=1}^{K} v^k} - \frac{1}{K} \sum_{k=1}^{K} \inner{u^k}{x^{k} + \alpha \xdif^k} \nonumber \\
        &\phantom{=} + \inner{ x^0 + \alpha u^0 - x - \alpha u }{ \frac{1}{K} \sum_{k=1}^{K} \xdif^k } 
        - \frac{\alpha (K+1)}{2K^2} \norm{ \sum_{k=1}^{K} \xdif^k }^2 + \langle x, \ubar^K \rangle \nonumber \\
        &= - \frac{1}{K} \sum_{k=1}^{K} \inner{ \subg{f}(x^k) }{ x^k }
        - \frac{1}{K} \sum_{k=1}^{K} \inner{u^k}{x^k + \alpha \xdif^k}
        + \frac{1}{K} \sum_{k=1}^{K} \inner{ x^0 + \alpha u^0 }{  \xdif^k } 
        - \frac{\alpha (K+1)}{2K^2} \norm{ \sum_{k=1}^{K} \xdif^k }^2, \label{eq:drs-gf-without-S2-1}
    \end{align}
    where in the second equality we cancel out all the inner product terms involving $x$ or $u$. Then, it follows from \eqref{eq:drs-gf-conv-xdif} that the first two terms on the right-hand side of \eqref{eq:drs-gf-without-S2-1} simplifies to
    \begin{align*}
           - \frac{1}{K} \sum_{k=1}^{K} \inner{ \subg{f}(x^k) }{ x^k }           - \frac{1}{K} \sum_{k=1}^{K} \inner{u^k}{x^k - \alpha \xdif^k} 
           &= - \frac{1}{K} \sum_{k=1}^{K} \inner{ \subg{f}(x^k) + u^k }{ x^k }
           -  \frac{1}{K} \sum_{k=1}^{K} \inner{ \xdif^k}{  \alpha u^k} \\
           &= -\frac{1}{K} \sum_{k=1}^{K} \inner{ \xdif^k }{ x^k + \alpha u^k }.
    \end{align*}
    Substituting this into~\eqref{eq:drs-gf-without-S2-1} and reorganizing, we obtain
    \begin{equation} \label{eq:drs-gf-without-S2-2}
        (\ineqs_f + \ineqs_g - \sos_1) - \lhs
        = \frac{1}{K} \sum_{k=1}^{K} \inner{ \xdif^k }{ x^0 + \alpha u^0 - x^k - \alpha u^k } - \frac{\alpha (K+1)}{2K^2} \norm{ \sum_{k=1}^{K} \xdif^k }^2.
    \end{equation}
    Now we eliminate $x^k$. Applying \eqref{eq:drs-gf-reform-x} recursively, we obtain
    \begin{equation*}
        x^k = x^{k-1} -  \alpha (u^{k} - u^{k-1}) - \alpha \xdif^k
        = \dots = x^0 + \alpha u^0 - \alpha u^k - \alpha \sum_{l=1}^{k} \xdif^l.
    \end{equation*}
    Substituting it into \eqref{eq:drs-gf-without-S2-2} and using \eqref{eq:drs-gf-conv-xdif}, we obtain
    \begin{align}
        \MoveEqLeft[0.2] (\ineqs_f + \ineqs_g - \sos_1) - \lhs \nonumber \\
        &= \frac{\alpha}{K} \sum_{k=1}^{K} \inner{ \xdif^k }{ \sum_{l=1}^{k} \xdif^l } - \frac{\alpha (K+1)}{2K^2} \norm{ \sum_{k=1}^{K} \xdif^k }^2 \nonumber \\
        &= \frac{\alpha}{K} \sum_{k=1}^{K} \sum_{l=1}^{k-1} \inner{ \xdif^k }{  \xdif^l } + \frac{\alpha}{K} \sum_{k=1}^{K} \norm{ \xdif^k }^2  - \frac{\alpha (K+1)}{2K^2} \pr{ \sum_{k=1}^{K} \norm{ \xdif^k }^2 + 2 \sum_{k=1}^{K} \sum_{l=1}^{k-1} \inner{ \xdif^k }{  \xdif^l } } \nonumber \\
        &= \frac{\alpha}{2K^2} \pr{ -2\sum_{k=1}^{K} \sum_{l=1}^{k-1} \inner{ \xdif^k }{  \xdif^l } + (K-1) \sum_{k=1}^{K} \norm{ \xdif^k }^2  } \nonumber \\
        &= \frac{\alpha}{2K^2}  \pr{ -2\sum_{k=1}^{K} \sum_{l=1}^{k-1} \inner{ \xdif^k }{  \xdif^l } + \sum_{k=1}^{K} \sum_{l=1}^{k-1} \pr{ \norm{ \xdif^k }^2 +  \norm{ \xdif^l }^2 } } \nonumber \\
        &= \frac{\alpha}{2K^2} \sum_{k=1}^{K} \sum_{l=1}^{k-1} \norm{ \xdif^k - \xdif^l }^2
        = \sos_2. \label{eq:drs-gf-prf-last}
    \end{align}
    The second-to-last equality follows from $(K-1) \sum_{k=1}^{K} \|\xdif^k\|^2 = \sum_{k=1}^{K} \sum_{l=1}^{k-1} (\|\xdif^k\|^2 + \|\xdif^l\|^2)$, which can be verified by comparing the coefficients of each $\norm{\xdif^i}^2$ terms.  Therefore, $\ineqs_f + \ineqs_g - \sos_1 - \sos_2 = \lhs$, which is our desired conclusion. 
\end{proof}

The tightness of \eqref{eq:drs-gf-conv-gap} is now verified using a worst-case example motivated by \cref{cor:drs-gf-conv}.
\begin{theorem}[Worst-case example for \eqref{eq:drs-gf}] \label{thm:drs-gf-worst-case}
    Under the same setting as in \cref{thm:drs-gf-conv}, for any $K \in \natint_+$ and any $\alpha > 0$, there exist \CCP\ functions $f$ and $g$ and points $x^0, u^0, x, u \in \reals^n$ such that $\|x^0 - x\|^2 + \alpha^2 \|u^0 - u\|^2 = 1$ and
    \begin{equation*} 
        \cL(\xbar^K,u) - \cL(x,\ubar^K) = \frac{1}{\alpha(K+1)} \pr{ \|x^0 - x\|^2 + \alpha^2 \|u^0 - u\|^2 }.
    \end{equation*}
\end{theorem}
\begin{proof}
    Fix $K \in \natint_+$ and $\alpha > 0$. Let $e_0 \in \reals^n$ denote an arbitrary unit vector; that is, a vector with one entry equal to one and all others equal to zero. Define $x^0 = e_0 / \sqrt{2} \in \reals^n$, $u^0 = x^0 / \alpha \in \reals^n$, and $\xtilde = \utilde = 0 \in \reals^n$. Then, the initial condition holds: $\|x^0 - \xtilde\|^2 + \alpha^2 \|u^0 - \utilde\|^2 = 1$. Let
    \[
        f(x) = \frac{\sqrt 2}{\alpha (K+1)} \|x\|, \qquad g(x) = 0,
    \]
    (so $g^\ast (y) = \delta_{\{0\}} (y)$). Under this setup, \eqref{eq:drs-gf} generates the iterates
    \begin{equation} \label{eq:drs-gf-worst-case-iter}
        u^{k} = \begin{cases}
            \frac{1}{\alpha}x^0 & k=0 \\
            0 & k \ge 1,
        \end{cases} \qquad \quad x^{k+1} = \begin{cases}
            \prox_{\alpha f} (2x^0) & k=0 \\
            \prox_{\alpha f} (x^k)  & k \ge 1.
        \end{cases}
    \end{equation}
    The $x$-iteration is simply the proximal point method starting at $2x^0$. Then, from the definition of~$f$, we have
    \begin{equation*}
        \prox_{\alpha f} (y) 
        = \begin{cases}
            \pr{ \|{y}\| - \frac{\sqrt{2}}{K+1} } \frac{y}{\|{y}\|} & \text{if} \ \|y\| \ge \frac{\sqrt{2}}{K+1} \\
            0 & \text{otherwise.} 
        \end{cases}
    \end{equation*}
    So, with $x^0 = e_0 / \sqrt{2}$, we show that
    \begin{equation*}
        x^k = \sqrt{2} \pr{ 1 - \frac{k}{K+1} } e_0, \qquad k = 1,\ldots,K
    \end{equation*}
    by induction.
    \begin{enumerate}
        \item [(i)] When $k=1$, it follows from $\frac{\sqrt{2}}{K+1} \le \sqrt{2} = \|2x^0\|$ that
        \begin{equation*}
            x^{1} 
            = \pr{ \|2x^0\| - \frac{\sqrt{2}}{K+1} } \frac{2x^0}{\|2x^0\|}
            = \sqrt{2} \pr{ 1 - \frac{1}{K+1} } e_0. 
        \end{equation*}

        \item [(ii)] Assume that the induction hypothesis is true for $k=m\le K-1$. Then, by the induction hypothesis, we have
        \[
            \frac{\sqrt{2}}{K+1} 
            \le \sqrt{2} \pr{ 1 - \frac{m}{K+1} }
            = \|x^m\|.
        \]
        Thus, 
        \begin{equation*}
            \pr{ \|{x^m}\| -  \frac{\sqrt{2}}{K+1} } \frac{x^m}{\|{x^m}\|} 
            = \pr{ \sqrt{2} \pr{ 1 - \frac{m}{K+1} } -  \frac{\sqrt{2}}{K+1} } e_0
            = \sqrt{2} \pr{1 - \frac{m+1}{K+1} } e_0,
        \end{equation*}
        so we can conclude with the desired result.
    \end{enumerate}
    Finally, invoking \cref{cor:drs-gf-conv}, it remains to prove that the four quantities $(\ineqs_f, \ineqs_g, \sos_1, \sos_2)$ in \eqref{eq:drs-gf-conv-ineqs-squares} are zero. Observe that the points $x^1,x^2,\ldots,x^K,\xbar^K,\xtilde$ lie in a line and that $\partial f(x^k)$ is a singleton for all $k=1,2,\ldots,K$. It implies that $\ineqs_f=0$. Next, we see that $\ineqs_g = 0$ since $g = 0$, $\utilde = 0$, and $u^k = 0$ for all $k=1,\ldots,K$. In addition, since $\xdif^k = \subg f(x^k) + u^k = \subg f(x^k) = \tfrac{\sqrt{2}}{\alpha(K+1)} e_0$ is a constant vector for all $k=1,\ldots,K$, we obtain $\sos_2=0$. Finally, from
    \begin{equation*}
        \frac{\alpha (K+1)}{2K} \sum_{k=1}^{K} \xdif^k
        = \frac{\alpha (K+1)}{2K} \sum_{k=1}^{K} \frac{\sqrt{2}}{\alpha(K+1)} e_0
        = \frac{1}{\sqrt 2} e_0
        = x^0 - \xtilde
        = \alpha ( u^0 - \utilde ),
    \end{equation*}
    we obtain $\sos_1=0$. This completes the proof.
\end{proof}

\subsection{\texorpdfstring{DRS-$fg$}{DRS-fg}: worst-case rate and its tightness} \label{sec:drs-fg}

We now proceed with the convergence analysis of \eqref{eq:drs-fg}. Following the same approach as in the beginning of \cref{sec:drs-gf}, we reformulate \eqref{eq:drs-fg} as
\begin{subequations}
    \begin{align}
        x^{k+1} &= x^k - \alpha u^k - \alpha \subg f(x^{k+1}) \label{eq:drs-fg-reform-x} \\ 
        u^{k+1} &= u^k + \tfrac{1}{\alpha} (2x^{k+1} - x^{k}) - \tfrac{1}{\alpha} \subg g^\ast (u^{k+1}),
    \end{align}
\end{subequations}
and similarly we define
\begin{equation} \label{eq:drs-fg-p}
    p^{k+1} := \subg g^\ast (u^{k+1}) = x^{k+1} + (x^{k+1} - x^{k}) + \alpha (u^{k} - u^{k+1}),
\end{equation}
Just as for \eqref{eq:drs-gf}, we introduce an equality that enables us to derive the tight convergence rate of \eqref{eq:drs-fg}.
\begin{proposition} \label{prop:drs-fg-conv}
    Suppose $f$ and $g$ are \CCP\ functions, and $\{(x^k, u^k)\}_{k\in\bbN}$ is generated by \eqref{eq:drs-fg} with step size $\alpha>0$. Denote $\xbar^K, \ubar^K, \pbar^K, p$ as in \Cref{prop:drs-gf-conv}, and $p^k$ as in \eqref{eq:drs-fg-p}. Then, for all $K \in \natint_+$ and all $x, u \in \reals^n$, the equality
    \begin{equation} \label{eq:drs-fg-conv-eq}
        \cL(\xbar^K,u) - \cL(x,\ubar^K) - \frac{1}{\alpha (K+1)} \pr{\|x^0 - x\|^2 + \alpha^2 \|u^0 - u\|^2} = \ineqs_f + \ineqs_g - \sos_1 - \sos_2
    \end{equation}
    holds, where $\xdif^k$ is defined in \eqref{eq:drs-gf-conv-xdif}, $(\ineqs_f, \ineqs_g, \sos_2)$ are defined in \eqref{eq:drs-gf-conv-ineqs-squares}, and $\sos_1$ is redefined as
    \begin{equation} \label{eq:drs-fg-conv-ineqs-squares}
        \begin{aligned}
             \sos_1 &:= \frac{1}{\alpha (K+1)} \pr{ \norm{ x^0 - x - \frac{\alpha (K+1)}{2K} \sum_{k=1}^{K} \xdif^k }^2 + \alpha^2 \norm{ u^0 - u + \frac{K+1}{2K} \sum_{k=1}^{K} \xdif^k }^2 }. 
        \end{aligned}
    \end{equation}
\end{proposition}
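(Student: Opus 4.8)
The proof mirrors that of \Cref{prop:drs-gf-conv}, and the plan is to track exactly how the sign changes induced by the swapped update order propagate through the identical sequence of manipulations. Since the primal--dual gap function \eqref{eq:drs-gf-gap}, and hence the left-hand side $\lhs$, depends only on the convex functions and the ergodic iterates---not on which algorithm produced them---the expression for $\lhs$ in \eqref{eq:drs-gf-lhs} and the simplification of $\ineqs_f$ to \eqref{eq:drs-gf-ineqs-f} carry over verbatim. Likewise $\ineqs_g$ simplifies to \eqref{eq:drs-gf-ineqs-g-1} before any algorithm-specific substitution, so the first three ingredients require no rework.

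The first genuinely different step is the elimination of $\{p^k\}$. Here I would combine the definition \eqref{eq:drs-fg-p} with the reformulation \eqref{eq:drs-fg-reform-x}: substituting $x^k - x^{k-1} = -\alpha u^{k-1} - \alpha\subg f(x^k)$ into $p^k = x^k + (x^k - x^{k-1}) + \alpha(u^{k-1} - u^k)$ cancels the $\alpha u^{k-1}$ terms and yields
\[
    p^k = x^k - \alpha\subg f(x^k) - \alpha u^k = x^k - \alpha\xdif^k,
\]
the DRS-$fg$ analog of \eqref{eq:drs-gf-eliminate-p}, differing only in the sign of the $\alpha\xdif^k$ term. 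Substituting this into \eqref{eq:drs-gf-ineqs-g-1} flips the sign of each $\alpha$-weighted term relative to the DRS-$gf$ computation. In parallel, I would expand the redefined $\sos_1$ from \eqref{eq:drs-fg-conv-ineqs-squares}; the only change is the $+$ sign in the $u^0 - u$ square, which flips that cross term, so collecting squares gives
\[
    \sos_1 = \frac{\initdist{x}{u}}{K+1} - \inner{x^0 - \alpha u^0 - x + \alpha u}{\tfrac{1}{K}\textstyle\sum_{k=1}^K \xdif^k} + \frac{\alpha(K+1)}{2K^2}\norm{\textstyle\sum_{k=1}^K \xdif^k}^2.
\]

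Next I would form $(\ineqs_f + \ineqs_g - \sos_1) - \lhs$ and verify that every inner product carrying the free variables $x$ and $u$ cancels exactly as before: the $x$-terms collapse using $\tfrac1K\sum_k\xdif^k = \tfrac1K\sum_k\subg f(x^k) + \ubar^K$, while the $\alpha\inner{u}{\cdot}$ contributions from $\ineqs_g$ and $-\sos_1$ now carry opposite signs and annihilate. What remains is the DRS-$fg$ counterpart of \eqref{eq:drs-gf-without-S2-1}, in which $x^k + \alpha\xdif^k$ and $x^0 + \alpha u^0$ are replaced by $x^k - \alpha\xdif^k$ and $x^0 - \alpha u^0$; combining its first two sums produces $-\tfrac1K\sum_k\inner{\xdif^k}{x^k - \alpha u^k}$, the analog of \eqref{eq:drs-gf-without-S2-2}. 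The final step eliminates $x^k$: applying \eqref{eq:drs-fg-reform-x} recursively, now with $x^{k+1} - x^k = \alpha(u^{k+1} - u^k) - \alpha\xdif^{k+1}$, telescopes to
\[
    x^k = x^0 - \alpha u^0 + \alpha u^k - \alpha\textstyle\sum_{l=1}^k \xdif^l,
\]
so that $x^0 - \alpha u^0 - x^k + \alpha u^k = \alpha\sum_{l=1}^k\xdif^l$.

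The decisive observation is that this is the \emph{same} quantity $\alpha\sum_{l=1}^k\xdif^l$ that arose in the DRS-$gf$ proof. Hence the residual $\frac{\alpha}{K}\sum_{k=1}^K\inner{\xdif^k}{\sum_{l=1}^k\xdif^l} - \frac{\alpha(K+1)}{2K^2}\norm{\sum_{k=1}^K\xdif^k}^2$ coincides with the one in \eqref{eq:drs-gf-prf-last}, and the identical combinatorial identity collapses it to $\sos_2$, establishing \eqref{eq:drs-fg-conv-eq}. The main thing to get right is the bookkeeping of the three coordinated sign flips---in $p^k$, in the $x^k$ recursion, and in the $u^0 - u$ term of $\sos_1$---which must align precisely so that the $x$- and $u$-dependent terms still cancel and the residual reduces to the already-established DRS-$gf$ algebra. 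This is exactly why $\sos_1$ must be redefined as in \eqref{eq:drs-fg-conv-ineqs-squares} rather than reused unchanged, and it is the only place where genuine vigilance is required.
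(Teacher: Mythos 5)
Your proposal is correct and follows essentially the same route as the paper's own proof: reusing the $\lhs$, $\ineqs_f$, and $\ineqs_g$ simplifications from \Cref{prop:drs-gf-conv}, deriving $p^k = x^k - \alpha\xdif^k$ from \eqref{eq:drs-fg-reform-x} and \eqref{eq:drs-fg-p}, expanding the sign-flipped $\sos_1$, cancelling the $x$- and $u$-dependent terms, and telescoping the $x^k$ recursion so that $x^0 - \alpha u^0 - x^k + \alpha u^k = \alpha\sum_{l=1}^{k}\xdif^l$ reduces the residual to the same combinatorial identity yielding $\sos_2$. All sign bookkeeping in your argument matches the paper's computation, so there is nothing to add.
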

We point out that \eqref{eq:drs-fg-conv-ineqs-squares} differs from $\sos_1$ in \cref{prop:drs-gf-conv} by only a sign. This similarity suggests that the proof of \cref{prop:drs-fg-conv} closely parallels that of \cref{prop:drs-gf-conv}. Before presenting the proof, we emphasize that \cref{prop:drs-fg-conv} also serves as the key to establishing the tight convergence rate of \eqref{eq:drs-fg}.
\begin{theorem}[Convergence of \eqref{eq:drs-fg}] \label{thm:drs-fg-conv}
    Suppose $f$ and $g$ are \CCP\ functions, and $\{(x^k, u^k)\}_{k\in\bbN}$ is generated by \eqref{eq:drs-fg} with step size $\alpha>0$. Then, for all $k \in \natint_+$, the averaged iterates $(\xbar^K, \ubar^K)$ defined in \eqref{eq:ergodic} satisfy
    \begin{equation} \label{eq:drs-fg-conv-gap}
        \cL(\xbar^K,u) - \cL(x,\ubar^K) \le \frac{1}{\alpha (K+1)} \pr{\|x^0 - x\|^2 + \alpha^2 \|u^0 - u\|^2}
    \end{equation}
    for all $x \in \dom f$ and all $u \in \dom g^\ast$.
\end{theorem}
\begin{proof}
    It follows from \eqref{eq:cvx} and the convexity of $f$ and $g$ that the two quantities $\ineqs_f$ and $\ineqs_g$ defined in \eqref{eq:drs-fg-conv-ineqs-squares} are nonpositive. Moreover, the two quantities $\sos_1$ and $\sos_2$ are nonnegative since they are sums of square terms. So, the desired conclusion \eqref{eq:drs-fg-conv-gap} follows directly from \cref{prop:drs-fg-conv}.
\end{proof}

\Cref{prop:drs-fg-conv} also provides an explicit if-and-only-if condition that guides the construction of a worst-case example. We will leverage this corollary in the proof of \cref{thm:drs-fg-worst-case}. 
\begin{corollary} \label{cor:drs-fg-conv}
    Let $x \neq x^0$ and $u \neq u^0$. Under the same setting as in \cref{thm:drs-fg-conv}, the inequality~\eqref{eq:drs-fg-conv-gap} holds with equality if and only if the four quantities $\ineqs_f$, $\ineqs_g$, $\sos_1$, and $\sos_2$ are all zero.
\end{corollary}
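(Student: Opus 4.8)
The plan is to reuse, essentially verbatim, the sign-decomposition argument that proved \cref{cor:drs-gf-conv}, now driven by the exact identity of \cref{prop:drs-fg-conv} in place of \cref{prop:drs-gf-conv}. The first step is to record the signs of the four ingredients. By the subgradient inequality \eqref{eq:cvx}, applied term by term to the definitions in \eqref{eq:drs-gf-conv-ineqs-squares}, both $\ineqs_f$ and $\ineqs_g$ are nonpositive; and $\sos_2$ (from \eqref{eq:drs-gf-conv-ineqs-squares}) together with the redefined $\sos_1$ (from \eqref{eq:drs-fg-conv-ineqs-squares}) are nonnegative, each being a weighted sum of squared Euclidean norms. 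The sign flip in the second norm of \eqref{eq:drs-fg-conv-ineqs-squares} relative to \cref{prop:drs-gf-conv} is immaterial here, since a square is nonnegative regardless of the sign inside it.

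The second step is to invoke \cref{prop:drs-fg-conv}, whose identity \eqref{eq:drs-fg-conv-eq} equates the quantity $\cL(\xbar^K,u)-\cL(x,\ubar^K)-\initdist{x}{u}/(K+1)$ with the single combination $\ineqs_f+\ineqs_g-\sos_1-\sos_2$. Because the two $\ineqs$ terms are $\le 0$ while the two $\sos$ terms enter with a minus sign and are $\ge 0$, this combination is $\le 0$, and it vanishes if and only if all four of $\ineqs_f,\ineqs_g,\sos_1,\sos_2$ vanish separately. Translating back through \eqref{eq:drs-fg-conv-eq}, the gap bound \eqref{eq:drs-fg-conv-gap} is attained with equality exactly when these four quantities are zero, which is the assertion.

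I do not expect any genuine obstacle: the entire content is carried by \cref{prop:drs-fg-conv}, and the corollary is merely the observation that a sum of sign-definite terms is zero if and only if each term is zero. The only point worth flagging is that the hypotheses $x\neq x^0$ and $u\neq u^0$ play no role in the equivalence itself, just as in \cref{cor:drs-gf-conv}; they are retained because the corollary is meant to be applied in \cref{thm:drs-fg-worst-case}, where one wants the equality-attaining reference point to be a genuine, nondegenerate perturbation of the initialization rather than a trivial one.
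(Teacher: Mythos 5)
Your proof is correct and is essentially identical to the paper's: both record the sign-definiteness of $\ineqs_f,\ineqs_g$ (nonpositive by convexity) and $\sos_1,\sos_2$ (nonnegative as sums of squares, the sign flip inside the square in \eqref{eq:drs-fg-conv-ineqs-squares} being immaterial), and then read the equivalence directly off the exact identity \eqref{eq:drs-fg-conv-eq} of \cref{prop:drs-fg-conv}. Your closing observation that the hypotheses $x\neq x^0$, $u\neq u^0$ are not used in the argument also matches the paper, whose proof likewise never invokes them.
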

\begin{proof}
    Recall from the proof of \cref{thm:drs-fg-conv} that $\ineqs_f$ and $\ineqs_g$ are nonpositive and $\sos_1$ and $\sos_2$ are nonnegative. This implies that $\ineqs_f + \ineqs_g - \sos_1 - \sos_2 = 0$ if and only if each term is zero. So, the desired conclusion follows directly from \cref{prop:drs-fg-conv}.
\end{proof}

Now, we prove \cref{prop:drs-fg-conv}. 
\begin{proof}[Proof of \cref{prop:drs-fg-conv}]
    Denote the left-hand side of \eqref{eq:drs-fg-conv-eq} as \lhs, as in \eqref{eq:drs-gf-lhs}. 
    The first step in organizing $\ineqs_f$ and $\ineqs_g$ can be done in the same way as in \eqref{eq:drs-gf-ineqs-f} and \eqref{eq:drs-gf-ineqs-g-1}. Now we eliminate the $\{p^k\}_{k=1}^K$ terms. We obtain from \eqref{eq:drs-fg-reform-x} that $x^{k} - x^{k-1} = - \alpha u^{k-1} - \alpha \subg f(x^{k})$. Substituting this into \eqref{eq:drs-fg-p} and recalling the definition of $\xdif^k$ in \eqref{eq:drs-gf-conv-xdif} yields
    \begin{equation} \label{eq:drs-fg-eliminate-p}
        p^k = x^k - \alpha (\subg{f}(x^k) + u^k) = x^k - \alpha \xdif^k. 
    \end{equation}
    Note that the sign of $\xdif^k$ is flipped compared to \eqref{eq:drs-gf-eliminate-p}.  
    Then, eliminating $p^k$ in $\ineqs_g$ yields 
    \begin{equation*}
        \ineqs_g = g(p) - g(\pbar^K) - \inner{u}{p} + \inner{\ubar^K}{\pbar^{K}}  + \inner{u}{\xbar^K} - \alpha \inner{u}{ \frac{1}{K} \sum_{k=1}^{K} \xdif^k } - \frac{1}{K} \sum_{k=1}^{K} \inner{u^k}{x^k - \alpha \xdif^k},
    \end{equation*}
    where all terms, except the last two, appear in the \lhs. Then, proceeding with a similar calculation to that used to obtain \eqref{eq:drs-gf-without-S2-1}, but being careful with the signs of $u$, $u^0$, and $\xdif^k$, we obtain:
    \begin{align}
        \MoveEqLeft[0.2] (\ineqs_f + \ineqs_g - \sos_1) - \lhs \nonumber \\
        &= - \frac{1}{K} \sum_{k=1}^{K} \inner{ \subg{f}(x^k) }{ x^k }
        - \frac{1}{K} \sum_{k=1}^{K} \inner{u^k}{x^k - \alpha \xdif^k}
        + \frac{1}{K} \sum_{k=1}^{K} \inner{ x^0 - \alpha u^0 }{  \xdif^k } 
        - \frac{\alpha (K+1)}{2K^2} \norm{ \sum_{k=1}^{K} \xdif^k }^2. \label{eq:drs-fg-without-S2-1}
    \end{align}
    Then, it follows from the definition of $v^k$ \eqref{eq:drs-gf-conv-xdif} that the first two terms on the right-hand side of~\eqref{eq:drs-fg-without-S2-1} simplifies to
    \begin{equation*}
    \begin{aligned}
         - \frac{1}{K} \sum_{k=1}^{K} \inner{ \subg{f}(x^k) }{ x^k } - \frac{1}{K} \sum_{k=1}^{K} \inner{u^k}{x^k - \alpha \xdif^k} 
        &= - \frac{1}{K} \sum_{k=1}^{K} \inner{ \subg{f}(x^k) + u^k }{ x^k }
        +  \frac{1}{K} \sum_{k=1}^{K} \inner{ \xdif^k}{  \alpha u^k} \\
        &= \frac{1}{K} \sum_{k=1}^{K} \inner{ \xdif^k }{ -x^k + \alpha u^k }.
    \end{aligned}
    \end{equation*}
    Substituting this into~\eqref{eq:drs-fg-without-S2-1} gives
    \begin{equation} \label{eq:drs-fg-without-S2-2}
        (\ineqs_f + \ineqs_g - \sos_1) - \lhs
        = \frac{1}{K} \sum_{k=1}^{K} \inner{ \xdif^k }{ x^0 - \alpha u^0 - x^k + \alpha u^k } - \frac{\alpha (K+1)}{2K^2} \norm{ \sum_{k=1}^{K} \xdif^k }^2.
    \end{equation}
    Now we eliminate $x^k$. Applying \eqref{eq:drs-fg-reform-x} recursively, we obtain
    \begin{equation*}
        x^k
        = \dots
        = x^0 - \alpha \sum_{l=0}^{k-1} \pr{ u^{l} + \subg{f}(x^{l+1}) } 
        = x^0 - \alpha u^0 - \alpha \subg{f}(x^k) - \alpha \sum_{l=1}^{k-1} \xdif^l.
    \end{equation*}
    Substituting it into \eqref{eq:drs-fg-without-S2-2} and using \eqref{eq:drs-gf-conv-xdif}, proceeding same calculation as in \eqref{eq:drs-gf-prf-last} we obtain
    \begin{equation*} 
            (\ineqs_f + \ineqs_g - \sos_1) - \lhs
            = \frac{\alpha}{K} \sum_{k=1}^{K} \inner{ \xdif^k }{ \sum_{l=1}^{k} \xdif^l } - \frac{\alpha (K+1)}{2K^2} \norm{ \sum_{k=1}^{K} \xdif^k }^2 
            = \sos_2.
    \end{equation*}
    Therefore, $\ineqs_f + \ineqs_g - \sos_1 - \sos_2 = \lhs$, which is our desired conclusion. 
\end{proof}

The tightness of \eqref{eq:drs-fg-conv-gap} is now verified using a worst-case example motivated by \cref{cor:drs-fg-conv}.
\begin{theorem}[Worst-case example for \eqref{eq:drs-fg}] \label{thm:drs-fg-worst-case}
    Under the same setting as in \cref{thm:drs-fg-conv}, for any $K \in \natint_+$ and any $\alpha > 0$, there exist \CCP\ functions $f$ and $g$ and points $x^0, u^0, x, u \in \reals^n$ such that $\|x^0-x\|^2 + \alpha^2 \|u^0 - u\|^2 = 1$ and
    \begin{equation*} 
        \cL(\xbar^K,u) - \cL(x,\ubar^K) = \frac{1}{\alpha(K+1)} \pr{ \|x^0 - x\|^2 + \alpha^2 \|u^0 - u\|^2 }.
    \end{equation*}
\end{theorem}
\begin{proof}
    Fix $K \in \natint_+$ and $\alpha > 0$. Let $e_0 \in \reals^n$ denote an arbitrary unit vector; that is, a vector with one entry equal to one and all others equal to zero. Define $x^0 = e_0 / \sqrt{2} \in \reals^n$, $u^0 = -x^0 / \alpha \in \reals^n$, and $\xtilde = \utilde =0 \in \reals^n$. Then, the initial condition holds: $\|x^0 - \xtilde\|^2 + \alpha^2 \|u^0 - \utilde\|^2 = 1$. Let 
    \begin{equation*}
        f(x) = \frac{\sqrt{2}}{{\alpha}(K+1)} \norm{ x }, \qquad g(x) = 0,
    \end{equation*}
    (so $g^\ast (y) = \delta_{\set{0}}(y)$). Under this setup, \eqref{eq:drs-fg} generates the iterates
    \begin{equation*}
        u^{k} = \begin{cases}
            -\frac{1}{\alpha} x^0 & k=0 \\
            0 & k \ge 1,
        \end{cases} \qquad \quad x^{k+1} = \begin{cases}
            \prox_{\alpha f} (2x^0) & k=0 \\
            \prox_{\alpha f} (x^k) & k\ge 1.
        \end{cases}
    \end{equation*}
    Comparison with \eqref{eq:drs-gf-worst-case-iter} reveals that \eqref{eq:drs-fg} generates the same sequence $\{(x^k,u^k)\}$ despite a different initial point $u^0$. Since the $x$-iterates are exactly the same as in the proof of \cref{thm:drs-gf-worst-case}, the remainder of the proof readily extends from that of \cref{thm:drs-gf-worst-case}.
\end{proof}

As discussed earlier, \eqref{eq:drs-fg} and \eqref{eq:drs-gf} are not equivalent, in the sense that they generally produce different sequences of iterates. However, as shown in \cref{thm:drs-gf-worst-case} and \cref{thm:drs-fg-worst-case}, their tight convergence rates are identical (under the general convex setting). Moreover, the corresponding worst-case examples are nearly the same, differing only in the sign of the initial iterate $u^0$.

\section{Convergence analysis of two variants of DYS} \label{sec:dys}

\Cref{sec:drs} derives an $\cO(1/(K+1))$ ergodic rate of convergence for both variants of DRS. Yet, the known ergodic rate for both \eqref{eq:dys-gf} and \eqref{eq:dys-fg} is $\cO(1/K)$ \cite{JV23,SCMR22,Yan18}, of which the tightness is not addressed. So in this section, we investigate the convergence rates of both DYS variants. Interestingly, \eqref{eq:dys-gf} (originally proposed in \cite{DY17b}) has an $\cO(1/K)$ ergodic rate, slower than its special case \eqref{eq:drs-gf}, whereas the swapped version \eqref{eq:dys-fg} restores the $\cO(1/(K+1))$ rate as in \eqref{eq:drs-fg}.

Again, our analysis uses the primal--dual gap function, which, from the definition of $p$ and $\pbar^K$ in~\eqref{eq:drs-gf-pbar}, can be reformulated as
\begin{equation} \label{eq:dys-gap}
    \begin{split}
        \cL(\bar{x}^K, u) - \cL(x, \bar{u}^K) 
        &= f(\bar{x}^K) + h(\bar{x}^K) + \langle u, \bar{x}^K \rangle - \langle u, p \rangle + g(p)  \\ &\phantom{=} - \big(f(x) + h(x) + \langle \bar{u}^K, x \rangle - \langle \bar{u}^K, \pbar^K \rangle + g(\pbar^K) \big).
    \end{split}
\end{equation}

\subsection{Analysis of \texorpdfstring{DYS-$gf$}{DYS-gf}}

In parallel to \cref{sec:drs-gf}, we reformulate \eqref{eq:dys-gf} as
\begin{subequations} \label{eq:dys-gf-reform}
    \begin{align}
        u^{k+1} &= u^k + \tfrac{1}{\alpha} x^k - \tfrac{1}{\alpha} \subg g^\ast (u^{k+1}) \label{eq:dys-gf-reform-u} \\
        p^{k+1} &:= \subg g^\ast (u^{k+1}) =  \alpha u^{k} + x^{k} - \alpha u^{k+1} \label{eq:dys-gf-p} \\
        x^{k+1} &= x^k - \alpha (2u^{k+1} - u^k) - \alpha \nabla h(p^{k+1}) - \alpha \subg f(x^{k+1}) \label{eq:dys-gf-reform-x-1} \\
        &= p^{k+1} - \alpha u^{k+1} - \alpha \nabla h(p^{k+1}) - \alpha \subg f(x^{k+1}) \label{eq:dys-gf-reform-x-2}.
    \end{align}
\end{subequations}
Not surprisingly, this iteration reduces to \eqref{eq:drs-gf-reform} when $h=0$.

We now prove the core equality that provides the convergence proof. Remarkably, \cref{prop:dys-gf-conv} does not reduce to \cref{prop:drs-gf-conv} when $h=0$. 
\begin{proposition} \label{prop:dys-gf-conv}
    Suppose $f$, $g$, and $h$ are CCP and $h$ is $L$-smooth (with $L>0$). Suppose also that $\{(x^k,u^k)\}_{k \in \natint}$ is generated by \eqref{eq:dys-gf} with step size $\alpha = \frac{1}{L}$. Denote $p^k$ as in \eqref{eq:drs-gf-p}, $\pbar^K$, $p$ as in \eqref{eq:drs-gf-pbar}, and $(\xbar^K, \ubar^K)$ as in \eqref{eq:ergodic}. Then, for all $K \in \mathbb{N}_{+}$ and all $x,u \in \reals^n$, the following equality holds
    \begin{equation*} \label{eq:dys-gf-conv-eq}
            \cL(\xbar^K,u) - \cL(x,\ubar^K) - \frac{1}{\alpha K} \pr{ \|x^0 - x\|^2 + \alpha^2 \|u^0 - u\|^2 }  = \ineqs_f + \ineqs_g + \ineqs_h - \sos_h - \sos_1 - \sos_2,
    \end{equation*}
    where 
    \begin{equation} \label{eq:dys-gf-conv-xdif}
        \xdif^k =  \subg f(x^{k}) +  u^{k} + \nabla h(p^{k}),
    \end{equation}
    $(\ineqs_f, \ineqs_g)$ are defined as in \eqref{eq:drs-gf-conv-ineqs-squares}, and
    {\allowdisplaybreaks
        \begin{align*}
        \ineqs_h 
             &:= \frac{1}{K} \sum_{k=1}^{K} \pr{ h ( \bar{x}^K ) - h(p^{k}) + \inner{\nabla h ( \bar{x}^K )}{p^{k} - \bar{x}^K} + \frac{\alpha}{2} \norm{ \nabla h(\bar{x}^{K}) -  \nabla h(p^{k}) }^2 }  \\ &\phantom{=}
             +   \frac{1}{K} \sum_{k=1}^{K}  \pr{ h(p^{k}) - h(x) + \inner{\nabla h(p^{k})}{x - p^{k}}  + \frac{\alpha}{2} \norm{ \nabla h(x) -\nabla h(p^{k}) }^2 } \\ 
        \sos_h 
             &:= \frac{\alpha}{2} \norm{ \nabla h(\bar{x}^K) + \frac{1}{K} \sum_{k=1}^{K} \pr{ \subg f(x^k) + u^k }  }^2 
            + \frac{\alpha}{2} \norm{ \nabla h(x) - \frac{1}{K} \sum_{k=1}^{K} \nabla h(p^{k}) }^2 \\
        \sos_1 
            &:= \frac{1}{\alpha K} \Bigg(  \norm{ x^0 - x   - \frac{\alpha }{2} \sum_{k=1}^{K} \xdif^k }^2 
            + \alpha^2 \norm{ u^0 - u   - \frac{1}{2} \sum_{k=1}^{K}\xdif^k  }^2 \Bigg) \\
        \sos_2
            &:= \frac{\alpha}{2K^2} \sum_{k=1}^{K} \sum_{l=1}^{k-1}  \pr{ \norm{ v^k - \nabla h(p^k) - (v^l - \nabla h(p^l)) }^2 + \norm{ \nabla h(p^{k}) - \nabla h(p^{l}) }^2   }.
        \end{align*}
    }
\end{proposition}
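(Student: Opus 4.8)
The plan is to follow the template of \cref{prop:drs-gf-conv}: expand each of the six quantities on the right of \eqref{eq:dys-gf-conv-eq} and show that, after cancellation, the linear and quadratic terms reassemble into the primal--dual gap \eqref{eq:dys-gap} minus $\initdist{x}{u}/K$. I would begin by substituting \eqref{eq:dys-gap} for the left-hand side, now carrying the extra contributions $h(\xbar^K)$ and $h(x)$, and then simplify $\ineqs_f$ and $\ineqs_g$ exactly as in \eqref{eq:drs-gf-ineqs-f} and \eqref{eq:drs-gf-ineqs-g-1}, since those two terms are unchanged. The genuinely new object is $\ineqs_h$: after distributing the averages, the pair $h(\xbar^K)-h(x)$ matches the gap, but the linear term $\inner{\nabla h(\xbar^K)}{\frac1K\sum_{k=1}^K p^k - \xbar^K}$ does \emph{not} vanish. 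Indeed, \eqref{eq:dys-gf-reform-x-2} yields $p^k = x^k + \alpha\xdif^k$ with $\xdif^k$ now containing $\nabla h(p^k)$, so $\frac1K\sum_k p^k = \xbar^K + \frac{\alpha}{K}\sum_k \xdif^k \neq \xbar^K$; this residual is exactly what $\sos_h$ is engineered to cancel, and it is the structural reason \cref{prop:dys-gf-conv} fails to reduce to \cref{prop:drs-gf-conv} at $h=0$.

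I would then eliminate $\{p^k\}$ via $p^k = x^k + \alpha\xdif^k$ in $\ineqs_g$ and $\ineqs_h$, expand $\sos_1$ (which, with denominator $K$ and half-weight $\tfrac{\alpha}{2}\sum_k\xdif^k$, becomes $\frac{\initdist{x}{u}}{K} - \frac{1}{K}\inner{(x^0-x)+\alpha(u^0-u)}{\sum_k\xdif^k} + \frac{\alpha}{2K}\norm{\sum_k\xdif^k}^2$), and expand the two squares in $\sos_h$. Verifying that every inner product carrying the free variables $x$ and $u$ cancels proceeds exactly as in the passage after \eqref{eq:drs-gf-without-S2-1}. Introducing the shorthand $a_k := \nabla h(p^k)$ and $b_k := \subg f(x^k)+u^k = \xdif^k - a_k$, the remaining terms separate into two channels. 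The first is the ``DRS-like'' channel: the surviving products $-\frac1K\sum_k \inner{\xdif^k}{x^k}$ together with the $\nabla h(p^k)$-linear pieces, the recursion $x^k = x^0+\alpha u^0 - \alpha u^k - \alpha\sum_{l\le k}\xdif^l$ (which holds verbatim for \eqref{eq:dys-gf}), and $-\sos_1$; I expect this to telescope to $\frac{\alpha}{2K}\sum_k(\norm{b_k}^2 - \norm{a_k}^2)$. The second is the ``smoothness'' channel, consisting of the gradient-difference squares in $\ineqs_h$ and the two squares in $\sos_h$.

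For the smoothness channel I would apply the variance identity $\sum_k\norm{c_k-\bar{c}}^2 = \frac1K\sum_k\sum_{l<k}\norm{c_k-c_l}^2$ (with $\bar{c}:=\frac1K\sum_k c_k$) twice. Subtracting the $\sos_h$ square $\frac{\alpha}{2}\norm{\nabla h(x)-\frac1K\sum_k a_k}^2$ from the $\ineqs_h$ square $\frac{\alpha}{2K}\sum_k\norm{\nabla h(x)-a_k}^2$ makes the $\nabla h(x)$-dependence drop out and leaves $\frac{\alpha}{2K^2}\sum_k\sum_{l<k}\norm{a_k-a_l}^2$, the second square of $\sos_2$. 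Subtracting the other $\sos_h$ square $\frac{\alpha}{2}\norm{\nabla h(\xbar^K)+\bar{b}}^2$ (with $\bar{b}:=\frac1K\sum_k b_k$) from $\frac{\alpha}{2K}\sum_k\norm{\nabla h(\xbar^K)-a_k}^2$ and using $\frac1K\sum_k a_k + \bar{b} = \frac1K\sum_k\xdif^k$ cancels the residual $\nabla h(\xbar^K)$-linear term flagged above and leaves $\frac{\alpha}{2K}\sum_k\norm{a_k}^2 - \frac{\alpha}{2}\norm{\bar{b}}^2$. Summing the two channels, the $\pm\frac{\alpha}{2K}\sum_k\norm{a_k}^2$ cancel and $\frac{\alpha}{2K}\sum_k\norm{b_k}^2 - \frac{\alpha}{2}\norm{\bar{b}}^2 = \frac{\alpha}{2K^2}\sum_k\sum_{l<k}\norm{b_k-b_l}^2$ supplies the first square of $\sos_2$, completing the identity. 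I anticipate the main obstacle to be precisely this two-channel bookkeeping: the coefficient $\tfrac{\alpha}{2}$ (equivalently $\alpha=1/L$, matching the $\tfrac{1}{2L}$ in \eqref{eq:smth} so that $\ineqs_h\le 0$ in \cref{thm:dys-gf-conv}), the denominator $K$, and the split $\xdif^k = a_k+b_k$ must all be kept perfectly consistent, so that the cross terms $\inner{a_k}{b_k}$ and the $\norm{a_k}^2$ contributions annihilate and the residual $\nabla h(\xbar^K)$ term is removed by $\sos_h$ rather than by averaging.
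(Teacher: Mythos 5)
Your proposal is correct and takes essentially the same route as the paper's proof: the same elimination $p^k = x^k + \alpha \xdif^k$, the same simplification of $\ineqs_f$, $\ineqs_g$ and cancellation of the $x$- and $u$-inner products, the same recursion for $x^k$, and the same pairing of the gradient-difference squares in $\ineqs_h$ with the squares of $\sos_h$ (the paper's $\tildeineqs_h$/$\tildesos_h$ regrouping), finished off by the variance identity \eqref{eq:dys-gf-aux}. Your two-channel bookkeeping checks out --- with $a_k = \nabla h(p^k)$ and $b_k = \xdif^k - a_k$, the DRS-like channel does telescope to $\frac{\alpha}{2K}\sum_{k=1}^{K}\pr{\norm{b_k}^2-\norm{a_k}^2}$ and the smoothness channel supplies both squares of $\sos_2$ while absorbing the residual $\nabla h(\xbar^K)$ term --- so this is the paper's computation carried out in a slightly different order.
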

In \cref{prop:dys-gf-conv}, the step size is set to $\alpha = \tfrac{1}{L}$ for two main reasons. First, this choice simplifies presentation of the analysis. The quantities $(\ineqs_h,\sos_h,\sos_1,\sos_2)$ are already intricate owing to the presence of the smooth term $h$; allowing for a broader range of step sizes would further complicate the presentation and proofs with limited additional insight. Second, unlike DRS, the step size $\alpha$ in \eqref{eq:dys-gf} must be upper bounded by a function of $L$, and the exact admissible range remains unclear. A recent paper \cite{AT22} explores ways to enlarge this range, but the question is still open. Given these considerations, we believe the simplified setting $\alpha = \tfrac{1}{L}$ is sufficient for the purpose of this paper.

\begin{proof}[Proof of \cref{prop:dys-gf-conv}]
It follows from \eqref{eq:dys-gf-reform-x-2} and \eqref{eq:dys-gf-conv-xdif} that
\begin{equation} \label{eq:dys-gf-eliminate-p}
    p^{k} 
    = x^{k} + \alpha (\subg f(x^{k}) +  u^{k} + \nabla h(p^{k}))
    = x^{k} + \alpha \xdif^k.
\end{equation}
Recalling \eqref{eq:dys-gap}, left-hand side of \eqref{eq:dys-gf-conv-eq} is
\begin{equation*}
    \begin{aligned}
     \lhs &= f(\xbar^K) + h(\xbar^K) + \langle u, \xbar^K \rangle  + g(p) - \langle u, { p } \rangle 
     - \pr{ f(x) + h(x) + \langle \ubar^K, x \rangle  + g(\pbar^K) - \langle \ubar^K, { \pbar^K } \rangle }  \\ &\phantom{=}
      - \frac{1}{\alpha K} \pr{ \|x^0-x\|^2 + \alpha^2 \|u^0 - u\|^2 }.
    \end{aligned}
\end{equation*}
With the same argument as in \Cref{prop:drs-gf-conv}, $\ineqs_f$ and $\ineqs_g$ simplify to
\begin{equation} \label{eq:dual_dys_equality_lhs}
    \begin{aligned}
        \ineqs_f 
        &= f(\xbar^K) - f(x) + \inner{ x }{ \frac{1}{K} \sum_{k=1}^{K} \subg{f}(x^k) } - \frac{1}{K} \sum_{k=1}^{K} \inner{ \subg{f}(x^k) }{ x^k } \\
        \ineqs_g 
        &= g(p) - g(\pbar^K) - \inner{u}{p} + \inner{\ubar^K}{\pbar^{K}}  + \inner{u}{\xbar^K} + \alpha \inner{u}{ \frac{1}{K} \sum_{k=1}^{K} \xdif^k } - \frac{1}{K} \sum_{k=1}^{K} \inner{u^k}{x^k + \alpha \xdif^k}.
    \end{aligned}
\end{equation}

To regroup some of the terms, we define
\begin{equation*}
    \begin{aligned}
    \tildeineqs_h 
         &= \ineqs_h - \frac{\alpha}{2K} \sum_{k=1}^{K} \norm{ \nabla h(\bar{x}^{K}) -  \nabla h(p^{k}) }^2 -  \frac{\alpha}{2K} \sum_{k=1}^{K} \norm{ \nabla h(x) -\nabla h(p^{k}) }^2 \\
    \tildesos_h 
         &= \sos_h -\frac{\alpha}{2K} \sum_{k=1}^{K} \norm{ \nabla h(\bar{x}^{K}) -  \nabla h(p^{k}) }^2 -  \frac{\alpha}{2K} \sum_{k=1}^{K} \norm{ \nabla h(x) -\nabla h(p^{k}) }^2.
    \end{aligned}
\end{equation*}
We can easily verify that $\tildeineqs_h-\tildesos_h = \ineqs_h-\sos_h$. Next, we simplify $\tildeineqs_h$, $\tildesos_h$, and $\sos_1$ one by one.

For $\tildeineqs_h$, it follows from \eqref{eq:dys-gf-eliminate-p} and $\xbar^K = \frac{1}{K} \sum_{k=1}^{K} x^k$ that
\begin{equation*}   
    \frac{1}{K} \sum_{k=1}^{K} \inner{\nabla h ( \bar{x}^K )}{p^{k} - \bar{x}^K}
    = \frac{1}{K} \sum_{k=1}^{K} \inner{\nabla h ( \bar{x}^K )}{x^{k} + \alpha \xdif^k - \bar{x}^K}
    = \frac{\alpha}{K} \sum_{k=1}^{K} \inner{\nabla h ( \bar{x}^K )}{\xdif^k}.
\end{equation*}
Applying \eqref{eq:dys-gf-eliminate-p} to eliminate $p^k$, we can verify $\tildeineqs_h$ simplifies to
\begin{equation} \label{eq:dys-gf-Ih}
    \tildeineqs_h = h(\xbar^K) - h(x) + \frac{\alpha}{K} \sum_{k=1}^{K} \inner{\nabla h(\bar{x}^{K})}{\xdif^{k}} + \frac{1}{K} \sum_{k=1}^{K} \inner{\nabla h(p^{k})}{x - x^k - \alpha \xdif^k  },
\end{equation}
where first two terms appears in the \lhs.

Similarly, we have for $\tildesos_h$ that
\begin{align}
    \tildesos_h &=
        - \frac{\alpha}{2K} \sum_{k=1}^{K} \norm{ \nabla h(x) -\nabla h(p^{k}) }^2 + \frac{\alpha}{2} \norm{ \nabla h(x) - \frac{1}{K} \sum_{k=1}^{K} \nabla h(p^{k}) }^2 \nonumber \\
    &\phantom{=}
        -\frac{\alpha}{2K} \sum_{k=1}^{K} \norm{ \nabla h(\bar{x}^{K}) -  \nabla h(p^{k}) }^2 
        + \frac{\alpha}{2} \norm{ \nabla h(\bar{x}^K) + \frac{1}{K} \sum_{k=1}^{K} \pr{ \subg f(x^k) + u^k } }^2 \nonumber \\
    &= 
        - \frac{\alpha}{2K} \sum_{k=1}^K \norm{ \nabla h(p^k) }^2 + \frac{\alpha}{2} \norm{ \frac{1}{K} \sum_{k=1}^{K} \nabla h(p^k) }^2 \nonumber \\
    &\phantom{=}
        -\frac{\alpha}{2K} \sum_{k=1}^{K} \norm{ \nabla h(\bar{x}^{K}) -  \nabla h(p^{k}) }^2 
        + \frac{\alpha}{2} \norm{ \nabla h(\bar{x}^K) + \frac{1}{K} \sum_{k=1}^{K} \pr{ \subg f(x^k) + u^k } }^2. \label{eq:dys-gf-Sh-1}
\end{align}
The last two terms on the right-hand side of \eqref{eq:dys-gf-Sh-1} can be further simplified to
\begin{align*}
    &-\frac{\alpha}{2K} \sum_{k=1}^{K} \norm{ \nabla h(\bar{x}^{K}) -  \nabla h(p^{k}) }^2 + \frac{\alpha}{2} \norm{ \nabla h(\bar{x}^K) + \frac{1}{K} \sum_{k=1}^{K} \pr{ \subg f(x^k) + u^k }  }^2   \\
    &= - \frac{\alpha}{2K} \sum_{k=1}^K \norm{ \nabla h(p^k) }^2 + \frac{\alpha}{K} \sum_{k=1}^K \inner{\nabla h(\bar{x}^{K})}{\nabla h(p^k) + \subg f(x^k) + u^k}  + \frac{\alpha}{2} \norm{ \frac{1}{K} \sum_{k=1}^{K} \pr{ \subg f(x^k) + u^k } }^2 \\ 
    &= - \frac{\alpha}{2K} \sum_{k=1}^K \norm{ \nabla h(p^k) }^2 + \frac{\alpha}{K} \sum_{k=1}^K \inner{\nabla h(\bar{x}^{K})}{\xdif^k}  + \frac{\alpha}{2} \norm{ \frac{1}{K} \sum_{k=1}^{K} \pr{ \subg f(x^k) + u^k } }^2.
\end{align*}
Substituting it back to \eqref{eq:dys-gf-Sh-1} gives
\begin{equation} \label{eq:dys-gf-Sh-2}
    \resizebox{0.98\textwidth}{!}{$\displaystyle
    \tildesos_h 
    = - \frac{\alpha}{K} \sum_{k=1}^K \norm{ \nabla h(p^k) }^2 + \frac{\alpha}{2} \norm{ \frac{1}{K} \sum_{k=1}^{K} \pr{ \subg f(x^k) + u^k } }^2  + \frac{\alpha}{2} \norm{ \frac{1}{K} \sum_{k=1}^{K} \nabla h(p^k) }^2
    + \frac{\alpha}{K} \sum_{k=1}^K \inner{\nabla h(\bar{x}^{K})}{\xdif^k}.$}
\end{equation}

Next, $\sos_1$ can be simplified to
\begin{equation} \label{eq:dys-gf-S1}
    \sos_1
    = \frac{1}{\alpha K} \pr{ \|x^0 - x\|^2 + \alpha^2 \|u^0 - u\|^2 } 
    - \inner{ x^0 + \alpha u^0 - x  - \alpha u }{ \frac{1}{K} \sum_{k=1}^{K} \xdif^k }
    + \frac{\alpha}{2K} \norm{ \sum_{k=1}^{K} \xdif^k }^2.
\end{equation}

Combining \eqref{eq:dys-gf-Ih}, \eqref{eq:dys-gf-Sh-2} and \eqref{eq:dys-gf-S1} yields
\begin{align}
    \MoveEqLeft[0.2] (\ineqs_f + \ineqs_g + \ineqs_h - \sos_h - \sos_1) - \lhs \nonumber \\
    &= (\ineqs_f + \ineqs_g + \tildeineqs_h - \tildesos_h - \sos_1) - \lhs \nonumber \\
    &= 
    \inner{ x }{ \frac{1}{K} \sum_{k=1}^{K} \subg{f}(x^k) } - \frac{1}{K} \sum_{k=1}^{K} \inner{ \subg{f}(x^k) }{ x^k } 
     +  \alpha \inner{u}{ \frac{1}{K} \sum_{k=1}^{K} \xdif^k } - \frac{1}{K} \sum_{k=1}^{K} \inner{u^k}{x^k + \alpha \xdif^k} \nonumber \\ 
    &\phantom{=}
    + \frac{1}{K} \sum_{k=1}^{K} \inner{\nabla h(p^{k})}{x - x^k - \alpha \xdif^k  } + \frac{\alpha}{K} \sum_{k=1}^K \norm{ \nabla h(p^k) }^2 
    - \frac{\alpha}{2} \norm{ \frac{1}{K} \sum_{k=1}^{K} \pr{ \subg f(x^k) + u^k } }^2 \nonumber \\
    &\phantom{=}
    - \frac{\alpha}{2} \norm{ \frac{1}{K} \sum_{k=1}^{K} \nabla h(p^k) }^2
    + \inner{ x^0  +  \alpha u^0 - x - \alpha u }{ \frac{1}{K} \sum_{k=1}^{K} \xdif^k }
    - \frac{\alpha}{2K} \norm{ \sum_{k=1}^{K} \xdif^k }^2 + \langle \bar{u}^K, x \rangle \nonumber \\
    &=
    - \frac{1}{K} \sum_{k=1}^{K} \inner{ \subg{f}(x^k) }{ x^k }
    - \frac{1}{K} \sum_{k=1}^{K} \inner{u^k + \nabla h(p^k)}{x^k + \alpha \xdif^k} 
    + \frac{\alpha}{K} \sum_{k=1}^{K} \norm{  \nabla h(p^k) }^2 
    - \frac{\alpha}{2K} \norm{ \sum_{k=1}^{K} \xdif^k }^2 \nonumber \\
    &\phantom{=}
    - \frac{\alpha}{2} \norm{ \frac{1}{K} \sum_{k=1}^{K} \pr{ \subg f(x^k) + u^k } }^2  - \frac{\alpha}{2} \norm{ \frac{1}{K} \sum_{k=1}^{K} \nabla h(p^k) }^2 + \inner{ x^0  +  \alpha u^0 }{ \frac{1}{K} \sum_{k=1}^{K} \xdif^k }. \label{eq:dys-gf-without-S2-1}
\end{align}
In the second equality, inner product terms with $x$ and $u$ are canceled out. As a detail for inner product terms with $x$, they are canceled out by \eqref{eq:dys-gf-conv-xdif} and the fact that
\begin{equation*}
        \inner{ x }{ \frac{1}{K} \sum_{k=1}^{K} \subg{f}(x^k) } + \frac{1}{K} \sum_{k=1}^{K} \inner{\nabla h(p^{k})}{x}  - \inner{x}{\frac{1}{K} \sum_{k=1}^{K} \xdif^k}  +  \langle x, \ubar^K \rangle =0 .
\end{equation*}
Again, using \eqref{eq:dys-gf-conv-xdif}, the first two terms on the left-hand side of \eqref{eq:dys-gf-without-S2-1} becomes
\begin{align}
    \MoveEqLeft[0.2] - \frac{1}{K} \sum_{k=1}^{K} \inner{ \subg{f}(x^k) }{ x^k }
        - \frac{1}{K} \sum_{k=1}^{K} \inner{u^k + \nabla h(p^k)}{x^k + \alpha \xdif^k} \nonumber \\
    &= - \frac{1}{K} \sum_{k=1}^{K} \inner{ \subg{f}(x^k) + u^k + \nabla h(x^k) }{ x^k }
       -  \frac{\alpha}{K} \sum_{k=1}^{K} \inner{ \xdif^k}{  u^k + \nabla h(p^k)} \nonumber \\
    &= \frac{1}{K} \sum_{k=1}^{K} \inner{ \xdif^k }{ -x^k - \alpha\pr{ u^k + \nabla h(p^k) } }.
\end{align}
Substituting it back to \eqref{eq:dys-gf-without-S2-1} and reorganizing, we obtain
\begin{align}
    \MoveEqLeft[0.2] (\ineqs_f + \ineqs_g + \ineqs_h - \sos_h - \sos_1) - \lhs \nonumber \\
    &= \frac{1}{K} \sum_{k=1}^{K} \inner{ \xdif^k }{ x^0  +  \alpha u^0 - x^k - \alpha\pr{ u^k + \nabla h(p^k) } } + \frac{\alpha}{K} \sum_{k=1}^{K} \norm{  \nabla h(p^k) }^2 \nonumber \\  
    &\phantom{=}
    - \frac{\alpha}{2K^2} \norm{ \sum_{k=1}^{K} \pr{ \xdif^k - \nabla h(p^k) } }^2  
    - \frac{\alpha}{2K^2} \norm{ \sum_{k=1}^{K} \nabla h(p^k) }^2 
    - \frac{\alpha}{2K} \norm{ \sum_{k=1}^{K} \xdif^k }^2 . \label{eq:dys-gf-without-S2-2}
\end{align}
Now, we eliminate $x^k$ in the first term on the right-hand side of \eqref{eq:dys-gf-without-S2-2}. Applying~\eqref{eq:dys-gf-reform-x-1}
and \eqref{eq:dys-gf-eliminate-p} recursively gives
\begin{equation*}
    x^k = x^{k-1} - \alpha \pr{ u^{k} - u^{k-1} } - \alpha \xdif^k
    = \dots = x^0 + \alpha u^0 - \alpha u^k - \alpha \sum_{l=1}^{k} \xdif^l.
\end{equation*}
Substituting it back to \eqref{eq:dys-gf-without-S2-2} gives
\begin{align}
    \MoveEqLeft[0.2] (\ineqs_f + \ineqs_g + \ineqs_h - \sos_h - \sos_1) - \lhs \nonumber \\
    &= \frac{\alpha}{K} \sum_{k=1}^{K} \sum_{l=1}^{k} \inner{ \xdif^k }{  \xdif^l } 
        - \frac{\alpha}{K} \sum_{k=1}^{K} \inner{ \xdif^k }{  \nabla h(p^k) }  
        - \frac{\alpha}{2K^2} \norm{ \sum_{k=1}^{K} \pr{ \xdif^k - \nabla h(p^k) } }^2  -  \frac{\alpha}{2K} \norm{ \sum_{k=1}^{K} \xdif^k }^2 \nonumber \\
    &\phantom{=}
        + \frac{\alpha}{2K} \sum_{k=1}^{K} \norm{ \nabla h(p^k) }^2 
        + \frac{\alpha}{2K} \sum_{k=1}^{K} \norm{  \nabla h(p^k) }^2
        - \frac{\alpha}{2K^2} \norm{ \sum_{k=1}^{K} \nabla h(p^k) }^2 . \label{eq:dys-gf-without-S2-3}
\end{align}
Finally, observe that for any $\{a^k\}_{k \in \natint} \subset \reals^n$, we have
\begin{equation} \label{eq:dys-gf-aux}
    \sum_{k=1}^{K} \norm{  a^k }^2 - \frac{1}{K} \norm{ \sum_{k=1}^{K}  a^k }^2 
    = \frac{1}{K} \sum_{k=1}^{K} (K-1) \norm{ a^k }^2 - \frac{1}{K} \sum_{k=1}^{K} \sum_{l=1}^{k-1} 2\inner{  a^k }{ a^l } 
    = \frac{1}{K} \sum_{k=1}^{K} \sum_{l=1}^{k-1} \norm{ a^k - a^l }^2,
\end{equation}
and thus
\begin{equation*}
    \begin{aligned}
        &\frac{\alpha}{2K} \sum_{k=1}^{K} \norm{  \nabla h(p^k) }^2
    - \frac{\alpha}{2K^2} \norm{ \sum_{k=1}^{K} \nabla h(p^k) }^2  
        =  \frac{\alpha}{2K^2} \sum_{k=1}^{K} \sum_{l=1}^{k-1} \norm{ \nabla h(p^k) - \nabla h(p^l) }^2.
    \end{aligned}
\end{equation*}
Next, with 
\begin{equation*}
    \frac{\alpha}{K} \sum_{k=1}^{K} \sum_{l=1}^{k} \inner{ \xdif^k }{  \xdif^l } 
    -  \frac{\alpha}{2K} \norm{ \sum_{k=1}^{K} \xdif^k }^2
    = \frac{\alpha}{2K} \sum_{k=1}^{K}  \norm{ \xdif^k }^2,
\end{equation*}
the first five terms on the right-hand side of \eqref{eq:dys-gf-without-S2-3} become
\begin{equation*}
    \begin{aligned}
        \MoveEqLeft[0.2] \frac{\alpha}{K} \sum_{k=1}^{K} \sum_{l=1}^{k} \inner{ \xdif^k }{  \xdif^l } 
        - \frac{\alpha}{K} \sum_{k=1}^{K} \inner{ \xdif^k }{  \nabla h(p^k) }  
        - \frac{\alpha}{2K^2} \norm{ \sum_{k=1}^{K} \pr{ \xdif^k - \nabla h(p^k) } }^2  -  \frac{\alpha}{2K} \norm{ \sum_{k=1}^{K} \xdif^k }^2 \\
        &\phantom{=} + \frac{\alpha}{2K} \sum_{k=1}^{K} \norm{  \nabla h(p^k) }^2 \\
        &= \frac{\alpha}{2K} \sum_{k=1}^{K}  \norm{ \xdif^k }^2 - \frac{\alpha}{K} \sum_{k=1}^{K} \inner{ \xdif^k }{  \nabla h(p^k) } + \frac{\alpha}{2K} \sum_{k=1}^{K} \norm{  \nabla h(p^k) }^2 - \frac{\alpha}{2K^2} \norm{ \sum_{k=1}^{K} \pr{ \xdif^k - \nabla h(p^k) } }^2 \\
        &= \frac{\alpha}{2K} \sum_{k=1}^{K}  \norm{ \xdif^k - \nabla h(p^k) }^2 - \frac{\alpha}{2K^2} \norm{ \sum_{k=1}^{K} \pr{ \xdif^k - \nabla h(p^k) } }^2 \\
        &= \frac{\alpha}{2K^2}  \sum_{k=1}^{K} \sum_{l=1}^{k-1}   \norm{ \xdif^k - \nabla h(p^k) - (\xdif^l - \nabla h(p^l)) }^2 ,
    \end{aligned}
\end{equation*}
where the last equation follows from \eqref{eq:dys-gf-aux}. Finally, combining with \eqref{eq:dys-gf-without-S2-3} yields
\begin{equation*}
    \begin{aligned}
        &(\ineqs_f + \ineqs_g + \ineqs_h - \sos_h - \sos_1) - \lhs \\
        &= \frac{\alpha}{2K^2} \sum_{k=1}^{K} \sum_{l=1}^{k-1}  \pr{ \norm{ \xdif^k - \nabla h(p^k) - (\xdif^l - \nabla h(p^l)) }^2 + \norm{ \nabla h(p^k) - \nabla h(p^l) }^2   } 
        = \sos_2 .
    \end{aligned}
\end{equation*}
Therefore, $\lhs = \ineqs_f + \ineqs_g + \ineqs_h - \sos_h - \sos_1 - \sos_2$, and we conclude the desired result.
\end{proof}

An ergodic $\cO(1/K)$ rate of convergence for \eqref{eq:dys-gf} follows immediately from \cref{prop:dys-gf-conv}.
\begin{theorem}[Convergence of \eqref{eq:dys-gf}] \label{thm:dys-gf-conv}
    Suppose $f$, $g$, and $h$ are CCP and $h$ is $L$-smooth (with $L>0$). Suppose $\{(x^k,u^k)\}_{k \in \natint}$ is generated by \eqref{eq:dys-gf} with step size $\alpha = \frac{1}{L}$. Then, for all $k \in \natint_+$, the averaged iterates $(\xbar^K, \ubar^K)$ defined in \eqref{eq:ergodic} satisfy
    \begin{equation} \label{eq:dys-gf-conv-gap}
        \cL(\xbar^K,u) - \cL(x,\ubar^K) \le \frac{1}{\alpha K} \pr{ \|x^0 - x\|^2 + \alpha^2 \|u^0 - u\|^2 }
    \end{equation}
    for all $x \in \dom f$ and all $u \in \dom g^\ast$. Moreover, \eqref{eq:dys-gf-conv-gap} holds with equality if and only if $\ineqs_f, \ineqs_g, \ineqs_h, \sos_h, \sos_1$ and $\sos_2$ are all zero.
\end{theorem}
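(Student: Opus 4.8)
The plan is to derive \cref{thm:dys-gf-conv} directly from the master identity \eqref{eq:dys-gf-conv-eq} of \cref{prop:dys-gf-conv}, so that the entire argument collapses to a sign analysis of the six quantities $\ineqs_f,\ineqs_g,\ineqs_h,\sos_h,\sos_1,\sos_2$. Concretely, I would show $\ineqs_f,\ineqs_g,\ineqs_h\le 0$ and $\sos_h,\sos_1,\sos_2\ge 0$; then the right-hand side of \eqref{eq:dys-gf-conv-eq} is nonpositive, which is exactly \eqref{eq:dys-gf-conv-gap}. For $\ineqs_f$ and $\ineqs_g$ I would reuse verbatim the reasoning from the proof of \cref{thm:drs-gf-conv}: each bracketed summand of $\ineqs_f$ is the convexity inequality \eqref{eq:cvx} for $f$ applied to the pairs $(\xbar^K,x^k)$ and $(x^k,x)$, and each summand of $\ineqs_g$ is the subgradient inequality for $g$ at $p^k$ (valid because $u^k\in\partial g(p^k)$ by \eqref{eq:fenchel}) applied to $\pbar^K$ and to $p$; hence both are nonpositive.

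The crux is the sign of $\ineqs_h$, and this is precisely where the stepsize $\alpha=\tfrac1L$ enters. With $\alpha=\tfrac1L$, each summand in the two sums defining $\ineqs_h$ takes the form
\[
    h(a) - h(b) + \inner{\nabla h(a)}{b-a} + \frac{1}{2L}\norm{\nabla h(a)-\nabla h(b)}^2,
\]
namely with $(a,b)=(\xbar^K,p^k)$ in the first sum and $(a,b)=(p^k,x)$ in the second. This is exactly the $L$-smoothness inequality \eqref{eq:smth} for $h$, so every summand is $\le 0$ and therefore $\ineqs_h\le 0$. The remaining three quantities $\sos_h,\sos_1,\sos_2$ are each written as a sum of squared Euclidean norms and are thus manifestly nonnegative; feeding these signs into \eqref{eq:dys-gf-conv-eq} yields the bound \eqref{eq:dys-gf-conv-gap}.

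For the equality characterization, observe that the right-hand side $\ineqs_f+\ineqs_g+\ineqs_h-\sos_h-\sos_1-\sos_2$ of \eqref{eq:dys-gf-conv-eq} is a sum of six nonpositive contributions, namely $\ineqs_f,\ineqs_g,\ineqs_h$ together with $-\sos_h,-\sos_1,-\sos_2$; such a sum vanishes if and only if each contribution is zero, that is, if and only if all six quantities $\ineqs_f,\ineqs_g,\ineqs_h,\sos_h,\sos_1,\sos_2$ are zero. Since \eqref{eq:dys-gf-conv-gap} holds with equality exactly when this right-hand side equals zero, the stated if-and-only-if condition follows. The single genuine obstacle is verifying $\ineqs_h\le 0$: one must recognize that the choice $\alpha=\tfrac1L$ calibrates the quadratic correction term $\tfrac{\alpha}{2}\norm{\cdot}^2$ in each summand to match the coefficient $\tfrac{1}{2L}$ in \eqref{eq:smth}, after which the identification with \eqref{eq:smth} is immediate; every other step is routine.
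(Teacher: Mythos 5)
Your proposal is correct and follows essentially the same route as the paper: both invoke the identity \eqref{eq:dys-gf-conv-eq} from \cref{prop:dys-gf-conv}, deduce $\ineqs_f,\ineqs_g,\ineqs_h\le 0$ from convexity and the $L$-smoothness inequality \eqref{eq:smth} (where $\alpha=\tfrac1L$ makes the quadratic correction terms match), note $\sos_h,\sos_1,\sos_2\ge 0$ as sums of squares, and obtain the equality characterization from the fact that a sum of nonpositive terms vanishes only when each term does. The only difference is that you spell out the sign verifications (in particular for $\ineqs_h$) that the paper states in one line, which is a matter of exposition rather than substance.
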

\begin{proof}
    $\ineqs_f$ and $\ineqs_g$ are nonpositive since $f$ and $g$ are convex. It then follows from the convexity and $L$-smoothness of $h$ that $\ineqs_h$ is nonpositive. Moreover, $\sos_h$, $\sos_1$ and $\sos_2$ are nonnegative as they are sum of squares. Hence, we conclude \eqref{eq:dys-gf-conv-gap} from \cref{prop:dys-gf-conv}. The second conclusion is an immediate consequence of \cref{prop:dys-gf-conv} and the facts $\ineqs_f, \ineqs_g, \ineqs_h \le 0$ and $\sos_h, \sos_1, \sos_2 \ge 0$.
\end{proof}
The presented rate for \eqref{eq:dys-gf} is slower than that for its special case \eqref{eq:drs-gf} (see \cref{thm:drs-gf-conv}). In the next subsection, we showcase a simple example for which \eqref{eq:dys-gf} converges slower than $\cO(1/(K+1))$ and examine how the smooth term $h$ slows down convergence.

\subsection{\texorpdfstring{DYS-$gf$}{DYS-gf} fails to achieve an \texorpdfstring{$\cO(1/(K+1))$}{O(1/(K+1)} rate}

One may notice the difference in the rate \eqref{eq:dys-gf-conv-gap} for \eqref{eq:dys-gf} and that \eqref{eq:drs-gf-conv-gap} for \eqref{eq:drs-gf}. It is natural to ask whether \eqref{eq:dys-gf} can attain the $\cO(1/(K+1))$ rate as its special case \eqref{eq:drs-gf}. This section provides a negative answer by presenting a simple example in which \eqref{eq:dys-gf} converges strictly more slowly. Although the example does not match the exact $\cO(1/K)$ rate established in \cref{thm:dys-gf-conv}, it effectively demonstrates that the rate of \eqref{eq:dys-gf} can indeed be worse than $\cO(1/(K+1))$.

\begin{theorem} [Bad example for \eqref{eq:dys-gf}] \label{thm:dys-gf-worst-case}
    Under the same setting as in \cref{thm:dys-gf-conv}, for any $K \in \natint_+$ and any $\alpha > 0$, there exist CCP functions $f$ and $g$, $\tfrac{1}{\alpha}$-smooth convex function $h$, and points $x^0,u^0,x,u \in \reals^n$ such that $\|x^0 - x\|^2 + \alpha^2 \|u^0 - u\|^2 = 1$ and
    \begin{equation*}
        \cL(\xbar^K,u) - \cL(x,\ubar^K) > \frac{1}{\alpha (K+1)} \pr{ \|x^0 - x\|^2 + \alpha^2 \|u^0 - u\|^2 }.
    \end{equation*}
\end{theorem}

\begin{proof}
Fix $K \in \natint_+$ and $\alpha > 0$. Again, define $x^0 = e_0 / \sqrt{2}$, $u^0 = x^0 / \alpha$, and $x = u = 0$. (Recall $e_0 \in \reals^n$ denotes an arbitrary unit vector, so the example holds for any dimension $n$.) It is straightforward to check that $\|x^0 - x\|^2 + \alpha^2 \|u^0 - u\|^2 = 1$. We also denote
\[
    \eta_K := \frac{\sqrt{2} (K-1)}{K^2}.
\]
Consider
\[
    f(x) = \frac{(K-1) \eta_K}{\alpha} \|x\|, 
    \qquad g^\ast (y) = \eta_K \|y\|, 
    \qquad h(x) =  \frac{1}{2\alpha} \|x\|^2 .
\]
Thus, we have
\begin{equation*}
    \begin{aligned}
        \prox_{\alpha f} (x) &= \begin{cases}
            \pr{ \|{x}\| - (K-1)\eta_K } \frac{x}{\|{x}\|} \quad &\text{if} \ \|x\| \ge (K-1) \eta_K \\
            0 & \text{otherwise} 
        \end{cases} \\[2pt]
        \prox_{\alpha^{-1} g^\ast} (y) &= \begin{cases}
            \pr{ \|{y}\| - \frac{\eta_K}{\alpha} } \frac{y}{\|{y}\|} \quad &\text{if} \ \|y\| \ge \frac{\eta_K}{\alpha} \\
            0 & \text{otherwise.}
        \end{cases} 
    \end{aligned}
\end{equation*}
Then, we show by induction that
\begin{equation} \label{eq:dys-gf-ex-iter}
    x^k = \begin{cases}
        -(\sqrt{2} - K \eta_K) e_0 \quad &\text{if} \ k = 1 \\ 
        0 &\text{if} \ k = 2,3,\ldots,K,
    \end{cases} \quad \;
    u^k = \begin{cases}
        \frac{1}{\alpha}(\sqrt{2} - \eta_K) e_0 \quad &\text{if} \ k = 1 \\[2pt]
        \frac{1}{\alpha} (K-k) \eta_K e_0 &\text{if} \ k = 2,3,\ldots,K.
    \end{cases}
\end{equation}
\begin{enumerate}[label=(\roman*)]
    \item When $k=1$, recalling the definition \eqref{eq:dys-gf} and \eqref{eq:dys-gf-p}, 
    it is straightforward to verify that
    \begin{equation*}
    \begin{aligned}
        u^1 &= \prox_{\alpha^{-1} g^\ast} (u^0 + \tfrac{1}{\alpha} x^0) = \prox_{\alpha^{-1} g^\ast} (\tfrac{\sqrt{2}}{\alpha} e_0) = \tfrac{1}{\alpha} \big( \sqrt{2} - \eta_K \big) e_0 \\
        p^1 &= \alpha(u^0 + \tfrac{1}{\alpha} x^0) - \alpha u^1 = \eta_K e_0 \\
        x^1 &= \prox_{\alpha f} (p^{1} - \alpha \nabla h(p^{1}) - \alpha u^{1}) = \prox_{\alpha f} (- (\sqrt{2} - \eta_K) e_0) = -(\sqrt{2} - K \eta_K) e_0,
    \end{aligned}
    \end{equation*}
    where we use the fact $\nabla h(x) = \tfrac{1}{\alpha} x$ for all $x \in \reals^n$, $-\frac{-\alpha u^1}{\|-\alpha u^1\|} = e_0$, and
    \[
        \sqrt{2} - \eta_K
        = \pr{ 1 - \frac{K-1}{K^2} } \sqrt{2}
        = \frac{K^2-K+1}{K^2} \sqrt{2}
        > \frac{(K-1)^2}{K^2} \sqrt{2} = (K-1) \eta_K.
    \]
    Similarly, when $k=2$, we have
    \begin{equation*}
    \begin{aligned}
        u^2 &= \prox_{\alpha^{-1} g^\ast} (u^1 + \tfrac{1}{\alpha} x^1) = \prox_{\alpha^{-1} g^\ast} (\tfrac{1}{\alpha} (K-1) \eta_K e_0) = \tfrac{1}{\alpha} (K-2) \eta_K e_0  \\
        p^2 &= \alpha(u^1 + \tfrac{1}{\alpha} x^1) - \alpha u^2 = \eta_K  e_0 \\
        x^2 &= \prox_{\alpha f} (p^{2} - \alpha \nabla h(p^{2}) - \alpha u^{2}) = \prox_{\alpha f}(- \alpha u^{2}) = 0,
    \end{aligned}
    \end{equation*}
    where the last equality follows from the fact $\|-\alpha u^2\| = (K-2) \eta_K < (K-1) \eta_K$.

    \item Assume that the induction hypothesis is true for $k = m \leq K-1$. Then, by the induction hypothesis, we have
    \[
        \|u^m + \tfrac{1}{\alpha} x^m\| \geq \tfrac{\eta_K}{\alpha}, \qquad \|-\alpha u^m\| \leq (K-1) \eta_K.
    \]
    Thus, we have $x^{m+1} = 0$ and
    \[
        u^{m+1} = \prox_{\alpha^{-1} g^\ast} (u^m + \tfrac{1}{\alpha} x^m)
        = \tfrac{1}{\alpha} (K-m-1) \eta_K e_0.
    \]
    So the expression \eqref{eq:dys-gf-ex-iter} holds for $k = m+1 \leq K$.
\end{enumerate}
From \eqref{eq:dys-gf-ex-iter}, the ergodic sequence can be written as
\begin{equation} \label{eq:dys-gf-ex-ergodic}
    \begin{aligned}
        \xbar^K &= \frac{1}{K} x^1 = -  \frac{1}{K} \pr{ \sqrt{2} - K \eta_K } e_0  = \frac{1}{K} \pr{ 1 - \frac{K-1}{K} } \sqrt{2} e_0 = -\frac{\sqrt{2}}{K^2} e_0 , \\
        \ubar^K &= \frac{1}{\alpha K} \pr{ \pr{ \sqrt{2} -\eta_K } +  \sum_{k=2}^{K} \eta_K (K-k) } e_0
        = \frac{\sqrt{2} (K^2 - 2K + 3)}{2 \alpha K^2} e_0,
    \end{aligned}
\end{equation}
where the last equality follows from
\begin{align*}
    \sqrt{2} - \eta_K +  \sum_{k=2}^{K} (K-k) \eta_K 
    &= \sqrt{2} + \pr{ - 1 + \sum_{k=1}^{K-2} k } \eta_K
    = \sqrt{2} + \pr{ - 1 + \frac{(K-1)(K-2)}{2}  } \frac{K-1}{K^2} \sqrt{2} \\
    &= \frac{ ( K - 3 )(K-1)}{2K} \sqrt{2} +  \sqrt{2} 
    = \frac{\sqrt{2} (K^2 - 2K + 3)}{2K}.
\end{align*}
Finally, substituting \eqref{eq:dys-gf-ex-ergodic} and $x=u=0$ into the performance metric \eqref{eq:gap-func} yields 
\begin{align*}
    \cL(\bar{x}^K, u) - \cL(x, \bar{u}^K) &= f(\bar{x}^K) + h(\bar{x}^K) + \langle u, \bar{x}^K \rangle - g^\ast(u) - (f(x) + h(x) + \langle \bar{u}^K, x \rangle - g^\ast(\bar{u}^K)) \\
    &= f(\bar{x}^K) + h(\bar{x}^K) + g^\ast(\bar{u}^K) \\
    &= \frac{(K-1) \eta_K}{\alpha} \norm{ -\frac{1}{K^2} \sqrt{2} e_0  } + \frac{1}{2\alpha} \norm{-\frac{\sqrt{2}}{K^2}  e_0  }^2 + \eta_K \norm{ \frac{\sqrt{2}}{2 \alpha K^2} \pr{ K^2 - 2K + 3 } e_0 }   \\
    &= \frac{1}{\alpha} \frac{(K-1)^2}{K^2} \sqrt{2} \pr{ \frac{\sqrt{2}}{K^2}   } + \frac{1}{2\alpha} \pr{\frac{\sqrt{2}}{K^2}   }^2 +  \frac{K-1}{K^2} \frac{1}{\alpha K^2} \pr{ K^2 - 2K + 3 }    \\
    &= \frac{1}{\alpha K^4} \pr{ 2(K^2 - 2K + 1) + 1 + (K-1) \pr{ K^2 - 2K + 3 } }   \\
    &= \frac{1}{\alpha K^4} \pr{ 2K^2 - 4K + 2 + 1 + K^3 - 3K^2 + 5K - 3 }   
    = \frac{K^2-K+1}{ \alpha K^3} .
\end{align*}
Finally, the desired result follows from
\begin{equation*}
    \frac{K^2-K+1}{K^3} > \frac{1}{K+1}
    \quad \iff \quad
    K^3+1 = (K+1)(K^2-K+1) > K^3.  \qedhere
\end{equation*}
\end{proof}
Note that the rate $\frac{K^2-K+1}{\alpha K^3}$ equals $\frac{1}{\alpha K}$ when $K=1$; that is, the bound in \cref{thm:dys-gf-conv} is tight at $K=1$. We conjecture that the rate in \cref{thm:dys-gf-conv} is tight in general, but leave a formal proof as an open question for future work. 

\subsection{\texorpdfstring{DYS-$fg$}{DYS-fg}: restoring the \texorpdfstring{$\cO(1/(K+1))$}{O(1/(K+1))} rate by swapping \texorpdfstring{$f$}{f} and \texorpdfstring{$g$}{g}}

We proceed to analyze \eqref{eq:dys-fg}. As before, we reformulate \eqref{eq:dys-fg} as
\begin{subequations} \label{eq:dys-fg-reform}
    \begin{align}
        x^{k} &= x^{k-1} - \alpha (u^{k-1} + \nabla h(x^{k-1})) - \alpha \subg f(x^k) \label{eq:dys-fg-reform-x} \\
        p^{k} &= 
        x^k + (x^k - x^{k-1}) + \alpha ((\nabla h(x^{k-1}) + u^{k-1} ) - (\nabla h(x^{k}) + u^k ) ) \in \partial g^*(u^{k}). \label{eq:dys-fg-reform-p}
    \end{align}
\end{subequations}
We now establish the tight convergence rate of \eqref{eq:dys-fg}. As in the case of \eqref{eq:drs-fg}, we prove an equality that immediately yields the convergence of \eqref{eq:dys-fg}. Although \cref{prop:dys-fg-conv} reduces to \cref{prop:drs-fg-conv} when $h=0$, its proof is more involved because it must account for the additional smooth function $h$. 
\begin{proposition} \label{prop:dys-fg-conv}
    Suppose $f$, $g$, and $h$ are CCP and $h$ is $L$-smooth (with $L>0$). Suppose also that $\{(x^k,u^k)\}_{k \in \natint}$ is generated by \eqref{eq:dys-fg} with step size $\alpha = \frac{1}{L}$. Denote $\xbar^K, \ubar^K, \pbar^K, p$ as in \Cref{prop:dys-gf-conv}, and $p^k$ as in \eqref{eq:dys-fg-reform-p}. Then, for all $K \in \natint$ and all $x,u \in \reals^n$, the following equality holds
    \begin{equation} \label{eq:dys-fg-conv-eq}
        \cL(\xbar^K,u) - \cL(x,\ubar^K) - \frac{1}{\alpha (K+1)} \pr{ \|x^0-x\|^2 + \alpha^2 \|u^0 - u\|^2 } = \ineqs_f+\ineqs_g+\ineqs_h-\sos_1-\sos_2-\sos_h,
    \end{equation}
    where $(\xdif^k, \ineqs_f, \ineqs_g, \sos_2)$ are defined in \Cref{prop:dys-gf-conv}, and
    \begin{align*}
        \ineqs_f 
            &:= \resizebox{.93\textwidth}{!}{$\displaystyle
            \frac{1}{K} \sum_{k=1}^{K} \pr{ f ( \xbar^K ) - f(x^k) + \inner{\subg f ( \xbar^K )}{x^k - \xbar^K} } 
            + \frac{1}{K} \sum_{k=1}^{K}  \pr{ f(x^{k}) - f(x) + \inner{\subg f(x^{k})}{x - x^{k}} }$} \\
        \ineqs_g 
            &:= \frac{1}{K} \sum_{k=1}^{K} \pr{ g(p^k) - g( \pbar^K ) + \inner{u^k}{\pbar^K - p^k} } 
             +  \frac{1}{K} \sum_{k=1}^{K}  \pr{ g(p) - g(p^{k}) + \inner{u}{p^{k}-p} }  \\
        \ineqs_h 
            &:= \frac{1}{K} \sum_{k=1}^{K} \pr{ h(\xbar^K) - h(x^k) + \inner{\nabla h(\xbar^K)}{x^k - \xbar^K} + \frac{\alpha}{2} \norm{ \nabla h(x^k) - \nabla h(\xbar^K) }^2 } \\
            &\phantom{=} + \frac{K-1}{K(K+1)} \sum_{k=1}^{K}  \pr{ h(x^{k}) - h(x) + \inner{\nabla h(x^{k})}{x - x^{k}} + \frac{\alpha}{2} \norm{ \nabla h(x^k) -  \nabla h(x) }^2} \\
            &\phantom{=}+ \frac{2}{K+1} \pr{ h(x^{0}) - h(x) + \inner{\nabla h(x^{0})}{x - x^{0}} + \frac{\alpha}{2} \norm{ \nabla h(x^0) -  \nabla h\pr{ x } }^2} \\
            &\phantom{=} + \frac{2}{K(K+1)} \sum_{k=1}^{K}  \pr{ h(x^{k}) - h(x^0) + \inner{\nabla h(x^{k})}{x^0 - x^{k}} + \frac{\alpha}{2} \norm{ \nabla h(x^k) - \nabla h(x^0) }^2} \\
        \sos_h 
            &:= \frac{\alpha}{2} \norm{ \nabla h(\bar{x}^K) - \frac{1}{K} \sum_{k=1}^{K} \nabla h(x^k) }^2 + \frac{\alpha(K-1)}{2(K+1)} \norm{ \nabla h(x) - \frac{1}{K} \sum_{k=1}^{K} \nabla h(x^k) }^2    \\
        \sos_1 
            &:= \frac{1}{K+1} \Bigg( \frac{1}{ \alpha} \norm{ x^0 - x - \frac{\alpha (K+1)}{2K} \sum_{k=1}^{K} \xdif^k - \alpha \pr{ \nabla h(x^0) - \frac{1}{K} \sum_{k=1}^{K} \nabla h(x^k) } }^2 \\ 
            &\quad\qquad\quad            
            + \alpha \norm{ u^0 - u + \frac{K+1}{2K} \sum_{k=1}^{K} \pr{ \subg f(x^k) + u^k  + \nabla h(x^k)} }^2
            + \frac{\alpha}{K+1} \norm{ \nabla h(x^0) - \nabla h(x) }^2  \Bigg).
    \end{align*}
\end{proposition}
\begin{proof}
    We repeat some arguments done in \Cref{prop:drs-fg-conv}. First, we verify that $p^k$ in \eqref{eq:dys-fg-reform-p} can still be written as in \eqref{eq:drs-fg-eliminate-p}. From \eqref{eq:dys-fg-reform-x} we obtain $x^{k} - x^{k-1} = - \alpha (u^{k-1} + \nabla h(x^{k-1})) - \alpha \subg f(x^{k})$. Substituting it into \eqref{eq:dys-fg-reform-p} yields
    \begin{equation*}
        p^k = x^k - \alpha (\subg f(x^k) + u^k +  \nabla h(x^{k})) = x^k - \alpha \xdif^k. 
    \end{equation*}
    With the same argument of \Cref{prop:drs-fg-conv}, $\ineqs_f$ and $\ineqs_g$ simplify to
    \begin{equation*}
        \begin{aligned}
            \ineqs_f 
            &= f(\xbar^K) - f(x) + \inner{ x }{ \frac{1}{K} \sum_{k=1}^{K} \subg{f}(x^k) } - \frac{1}{K} \sum_{k=1}^{K} \inner{ \subg{f}(x^k) }{ x^k } \\
            \ineqs_g 
            &= g(p) - g(\pbar^K) - \inner{u}{p} + \inner{\ubar^K}{\pbar^{K}}  + \inner{u}{\xbar^K} - \alpha \inner{u}{ \frac{1}{K} \sum_{k=1}^{K} \xdif^k } - \frac{1}{K} \sum_{k=1}^{K} \inner{u^k}{x^k - \alpha \xdif^k},
        \end{aligned}
    \end{equation*}
    which is restated here for later reference.
    
    Now, to regroup some of the terms, we define
    \begin{align*}
        \tildeineqs_h 
             &= \ineqs_h -\frac{\alpha}{2K} \sum_{k=1}^{K} \norm{ \nabla h(\xbar^K) -  \nabla h(x^k) }^2 - \frac{\alpha(K-1)}{2K(K+1)} \sum_{k=1}^{K} \norm{  \nabla h(x) - \nabla h(x^k) }^2  \\ &\phantom{=} \quad\,\,
             - \frac{\alpha}{K+1} \norm{ \nabla h(x^0) -  \nabla h(x) }^2 - \frac{\alpha}{K(K+1)} \sum_{k=1}^{K}  \norm{  \nabla h(x^0) - \nabla h(x^k) }^2 \\
        \tildesos_h 
             &= \sos_h -\frac{\alpha}{2K} \sum_{k=1}^{K} \norm{ \nabla h(\xbar^K) -  \nabla h(x^k) }^2 -  \frac{\alpha(K-1)}{2K(K+1)} \sum_{k=1}^{K} \norm{  \nabla h(x) - \nabla h(x^k) }^2 \\
        \tildesos_1 
            &= \sos_1 - \frac{\alpha}{K+1} \norm{ \nabla h(x^0) -  \nabla h(x) }^2 - \frac{\alpha}{K(K+1)} \sum_{k=1}^{K}  \norm{  \nabla h(x^0) - \nabla h(x^k) }^2 .
    \end{align*}
    Moreover, the terms $h(x^k)$, $h(x^0)$, and $\langle \nabla h(x^k), {x^k-\xbar^k} \rangle$ cancel out, and thus $\tildeineqs_h$ simplifies to
    \begin{equation*}
        \begin{aligned}
            \tildeineqs_h 
            &= h(\xbar^K) - h(x) + \frac{2}{K(K+1)} \inner{ x^0 }{ \sum_{k=1}^{K} \nabla h(x^k) } 
         - \frac{1}{K} \sum_{k=1}^{K} \inner{\nabla h(x^{k})}{x^{k}} \\ &\quad
         + \frac{K-1}{K(K+1)} \sum_{k=1}^{K} \inner{\nabla h(x^{k})}{x }
         + \frac{2}{K+1} \inner{\nabla h(x^0)}{x-x^0}, 
        \end{aligned}
    \end{equation*}
    where first two terms appears in the left-hand side of \eqref{eq:dys-gap}.

    For $\tildesos_h$, straightforward calculations show that
    \begin{align*}
        \tildesos_h 
            &= \frac{\alpha}{2} \norm{ \nabla h(\bar{x}^K) - \frac{1}{K} \sum_{k=1}^{K} \nabla h(x^k) }^2 
            - \frac{\alpha}{2K} \sum_{k=1}^{K} \norm{ \nabla h(\xbar^K) -  \nabla h(x^k) }^2 \\
        &\phantom{=}
            + \frac{\alpha(K-1)}{2(K+1)} \norm{ \nabla h(x) - \frac{1}{K} \sum_{k=1}^{K} \nabla h(x^k) }^2 
            - \frac{\alpha(K-1)}{2K(K+1)} \sum_{k=1}^{K} \norm{  \nabla h(x) - \nabla h(x^k) }^2 \\
        &= - \frac{\alpha}{2K} \sum_{k=1}^K \norm{ \nabla h(x^k) }^2 + \frac{\alpha}{2} \norm{ \frac{1}{K} \sum_{k=1}^{K} \nabla h(x^k) }^2 \\
        &\phantom{=}
            + \frac{\alpha(K-1)}{2(K+1)} \norm{ \nabla h(x) - \frac{1}{K} \sum_{k=1}^{K} \nabla h(x^k) }^2 
            - \frac{\alpha(K-1)}{2K(K+1)} \sum_{k=1}^{K} \norm{  \nabla h(x) - \nabla h(x^k) }^2 \\
        &= - \frac{\alpha}{K+1} \sum_{k=1}^K \norm{ \nabla h(x^k) }^2 + \frac{\alpha K}{K+1} \norm{ \frac{1}{K} \sum_{k=1}^{K} \nabla h(x^k) }^2.
    \end{align*}
    The first equality follows from the definition of $\tildesos_h$. In the second equality, we cancel out all the $\nabla h(\xbar^K)$ terms, and the last equality cancels out the $\nabla h(x)$ terms.
       
    Next, we move on to $\tildesos_1$. Observe that the sum of the first and the last term of $\tildesos_1$ can be rewritten as:
    \begin{align*}
        \MoveEqLeft[0.2] \frac{1}{ \alpha(K+1)} \norm{ x^0 - x - \frac{\alpha (K+1)}{2K} \sum_{k=1}^{K} \xdif^k - \alpha \pr{ \nabla h(x^0) - \frac{1}{K} \sum_{k=1}^{K} \nabla h(x^k) } }^2 \\
        &\phantom{=} 
        - \frac{\alpha}{K(K+1)} \sum_{k=1}^{K}  \norm{  \nabla h(x^0) - \nabla h(x^k) }^2 \\
        &= 
        \frac{1}{ \alpha(K+1)} \norm{ x^0 - x   - \alpha \frac{K+1}{2K} \sum_{k=1}^{K} \xdif^k }^2
        - \frac{2}{ K+1} \inner{ x^0 - x }{ \nabla h(x^0) } \\
        &\phantom{=} 
        + \frac{2}{ K(K+1) } \inner{ x^0 - x }{ \sum_{k=1}^{K} \nabla h(x^k) } 
        + \inner{ \frac{1}{ K }  \sum_{k=1}^{K} \xdif^k }{ \alpha\nabla h(x^0) } - \frac{\alpha}{K^2} \inner{ \sum_{k=1}^{K} \xdif^k }{ \sum_{k=1}^{K} \nabla h(x^k)} \\
        &\phantom{=}
        - \frac{\alpha}{K(K+1)} \sum_{k=1}^K \norm{ \nabla h(x^k) }^2 + \frac{\alpha}{K+1} \norm{ \frac{1}{K} \sum_{k=1}^{K} \nabla h(x^k) }^2 \\ 
        &=
        \frac{1}{ \alpha(K+1)} \norm{ x^0 - x }^2 
        - \inner{x^0 - \alpha \nabla h(x^0) - x}{ \frac{1}{K}  \sum_{k=1}^{K} \xdif^k } 
        + \frac{\alpha(K+1)}{4K^2} \norm{  \sum_{k=1}^{K} \xdif^k }^2 \\
        &\phantom{=}
        - \frac{2}{K+1} \inner{ x^0 - x }{ \nabla h(x^0) }
        + \frac{2}{ K(K+1) } \inner{ x^0 - x }{ \sum_{k=1}^{K} \nabla h(x^k) } 
        - \frac{\alpha}{K^2} \sum_{k=1}^{K} \sum_{l=1}^{K} \inner{ \xdif^l }{ \nabla h(x^k)} \\
        &\phantom{=}
        - \frac{\alpha}{K(K+1)} \sum_{k=1}^K \norm{ \nabla h(x^k) }^2 
        + \frac{\alpha}{K+1} \norm{ \frac{1}{K} \sum_{k=1}^{K} \nabla h(x^k) }^2.
    \end{align*}
    \normalsize
    In the first equality, we expand the two squared terms and distribute the cross terms. In the second equality, we expand the first square and gather the inner product terms with $\sum_{k=1}^{K} \xdif^k$. 
    Similarly, the second term in $\tildesos_1$ can be rewritten as 
    \begin{align*}
        \MoveEqLeft[0.2] \frac{\alpha}{K+1} \norm{ u^0 - u + \frac{K+1}{2K} \sum_{k=1}^{K} \xdif^k }^2 \\
        &= \frac{\alpha}{K+1} \norm{ u^0 - u }^2 + {\alpha} \inner{ u^0 - u}{ \frac{1}{K} \sum_{k=1}^{K} \xdif^k } + \frac{\alpha (K+1)}{4K^2} \norm{\sum_{k=1}^{K}\xdif^k}^2.
    \end{align*}
    
    Now, gathering the observations, we denote by \lhs\ the left-hand side of \eqref{eq:dys-fg-conv-eq} and obtain
    \begin{align}
        \MoveEqLeft[0.2] (\ineqs_f + \ineqs_g + \ineqs_h - \sos_h - \sos_1) - \lhs \nonumber \\
        &= (\tildeineqs_f + \tildeineqs_g + \tildeineqs_h - \tildesos_h - \tildesos_1) - \lhs \nonumber \\
        &= 
        \inner{ x }{ \frac{1}{K} \sum_{k=1}^{K} \subg{f}(x^k) } - \frac{1}{K} \sum_{k=1}^{K} \inner{ \subg{f}(x^k) }{ x^k } 
        - \alpha \inner{u}{ \frac{1}{K} \sum_{k=1}^{K} \xdif^k } - \frac{1}{K} \sum_{k=1}^{K} \inner{u^k}{x^k - \alpha \xdif^k} \nonumber \\ 
        &\phantom{=}
        + \frac{2}{K(K+1)} \inner{ x^0 }{ \sum_{k=1}^{K} \nabla h(x^k) } + \frac{K-1}{K(K+1)} \sum_{k=1}^{K} \inner{\nabla h(x^{k})}{x }
        - \frac{1}{K} \sum_{k=1}^{K} \inner{\nabla h(x^{k})}{x^{k}} \nonumber \\
        &\phantom{=} 
        + \frac{\alpha}{K} \sum_{k=1}^K \norm{ \nabla h(x^k) }^2 
        - \alpha \norm{ \frac{1}{K} \sum_{k=1}^{K} \nabla h(x^k) }^2 + \inner{ x^0 - \alpha \pr{u^0 + \nabla h(x^0)} }{ \frac{1}{K} \sum_{k=1}^{K} \xdif^k } \nonumber \\
        &\phantom{=}
        - \inner{ x - \alpha u }{ \frac{1}{K} \sum_{k=1}^{K} \xdif^k } 
        - \frac{\alpha (K+1)}{2K^2} \norm{ \sum_{k=1}^{K} \xdif^k }^2 
        - \frac{2}{ K(K+1) } \inner{ x^0 - x }{ \sum_{k=1}^{K} \nabla h(x^k) } \nonumber \\
        &\phantom{=}
        + \frac{\alpha}{K^2} \sum_{k=1}^{K} \sum_{l=1}^{K} \inner{  \nabla h(x^k)}{  \xdif^l } + \langle x, \ubar^K \rangle \nonumber \\
        &= 
        - \frac{1}{K} \sum_{k=1}^{K} \inner{ \subg{f}(x^k) }{ x^k }
        - \frac{1}{K} \sum_{k=1}^{K} \inner{u^k}{x^k - \alpha \xdif^k} 
        - \frac{1}{K} \sum_{k=1}^{K} \inner{\nabla h(x^{k})}{ x^{k}} \nonumber \\ 
        &\phantom{=}
        + \frac{\alpha}{K} \sum_{k=1}^{K} \norm{  \nabla h(x^k) }^2
        - \alpha \norm{ \frac{1}{K} \sum_{k=1}^{K} \nabla h(x^k) }^2 + \inner{ x^0 - \alpha \pr{u^0 + \nabla h(x^0)} }{ \frac{1}{K} \sum_{k=1}^{K} \xdif^k } \nonumber \\
        &\phantom{=}
        - \frac{\alpha(K+1)}{2K^2} \norm{ \sum_{k=1}^{K} \xdif^k }^2 + \frac{\alpha}{K^2} \sum_{k=1}^{K} \sum_{l=1}^{K} \inner{  \nabla h(x^k)}{ \xdif^l }, \label{eq:dys-fg-without-S2-1}
    \end{align}
    In the second equality, the terms $\frac{2}{K+1} \langle{\nabla h(x^0)}, {x-x^0} \rangle$, $\frac{2}{K(K+1)} \langle{ x^0 }, { \sum_{k=1}^{K} \nabla h(x^k) } \rangle$ and the inner product terms with $x$ and $u$ are canceled out. 
    As a detail for the inner product terms with $x$, they are canceled out since
    \begin{align*}
        0 &= \inner{ x }{ \frac{1}{K} \sum_{k=1}^{K} \subg{f}(x^k) } + \frac{K-1}{K(K+1)} \sum_{k=1}^{K} \inner{\nabla h(x^{k})}{x} - \inner{x}{\frac{1}{K} \sum_{k=1}^{K} \xdif^k} \\
        &\phantom{=}
        + \frac{2}{ K(K+1) } \inner{ x }{ \sum_{k=1}^{K} \nabla h(x^k) } + \inner{x}{\ubar^K}, 
    \end{align*}
    where we also use the definition of $v^k$ \eqref{eq:dys-gf-conv-xdif}. Again, it follows from \eqref{eq:dys-gf-conv-xdif} that the first three terms on the right-hand side of \eqref{eq:dys-fg-without-S2-1} is 
    \begin{align}
        \MoveEqLeft[0.2] - \frac{1}{K} \sum_{k=1}^{K} \inner{ \subg{f}(x^k) }{ x^k }
            - \frac{1}{K} \sum_{k=1}^{K} \inner{u^k}{x^k - \alpha \xdif^k} 
            - \frac{1}{K} \sum_{k=1}^{K} \inner{\nabla h(x^{k})}{ x^{k}} \nonumber \\
        &= - \frac{1}{K} \sum_{k=1}^{K} \inner{ \subg{f}(x^k) + u^k + \nabla h(x^k) }{ x^k }
            +  \frac{1}{K} \sum_{k=1}^{K} \inner{ \xdif^k }{ \alpha u^k }
            = \frac{1}{K} \sum_{k=1}^{K} \inner{ \xdif^k }{ -x^k + \alpha u^k }. \label{eq:dys-fg-without-S2-2}
    \end{align}
    Substituting \eqref{eq:dys-fg-without-S2-2} back to \eqref{eq:dys-fg-without-S2-1} gives
    \begin{align}
        \MoveEqLeft[0.2] (\ineqs_f + \ineqs_g + \ineqs_h - \sos_h - \sos_1) - \lhs \nonumber \\
        &= \frac{1}{K} \sum_{k=1}^{K} \inner{ \xdif^k }{ x^0 - \alpha \pr{u^0 + \nabla h(x^0)} - x^k + \alpha u^k } 
        + \frac{\alpha}{K} \sum_{k=1}^{K} \norm{  \nabla h(x^k) }^2 \nonumber \\
        &\phantom{=}
        - \frac{\alpha}{K^2} \norm{ \sum_{k=1}^{K} \nabla h(x^k) }^2 -  \frac{\alpha(K+1)}{2K^2} \norm{ \sum_{k=1}^{K} \xdif^k }^2
        + \frac{\alpha}{K^2} \sum_{k=1}^{K} \sum_{l=1}^{K} \inner{  \nabla h(x^k)}{  \xdif^l }. \label{eq:dys-fg-without-S2-3}
    \end{align}
    
    Now, we eliminate $x^k$. Applying \eqref{eq:dys-fg-reform-x} recursively, we obtain
    \begin{equation*}
        x^k
        = \dots
        = x^0 - \alpha \sum_{l=0}^{k-1} \pr{ u^{l} + \nabla h(x^{l-1}) + \subg{f}(x^{l+1}) } 
        = x^0 - \alpha \pr{u^0 + \nabla h(x^0)} - \alpha \subg{f}(x^k) - \alpha \sum_{l=1}^{k-1} \xdif^l.
    \end{equation*}
    Substituting it back to \eqref{eq:dys-fg-without-S2-3} gives
    \begin{align}
        &(\ineqs_f + \ineqs_g + \ineqs_h - \sos_h - \sos_1) - \lhs \nonumber \\
        &= \frac{\alpha}{K} \sum_{k=1}^{K} \sum_{l=1}^{k} \inner{ \xdif^k }{  \xdif^l } 
        - \frac{\alpha(K+1)}{2K^2} \norm{ \sum_{k=1}^{K} \xdif^k }^2
        + \frac{\alpha}{K} \sum_{k=1}^{K} \norm{  \nabla h(x^k) }^2 \nonumber \\
        &\phantom{=}
        - \frac{\alpha}{K} \sum_{k=1}^{K} \inner{ \xdif^k }{  \nabla h(x^k) } 
        - \frac{\alpha}{K^2} \norm{ \sum_{k=1}^{K} \nabla h(x^k) }^2 
        + \frac{\alpha}{K^2} \sum_{k=1}^{K} \sum_{l=1}^{K} \inner{  \nabla h(x^k)}{  \xdif^l }. \label{eq:dys-fg-without-S2-4}
    \end{align}
    Finally, recalling \eqref{eq:drs-gf-prf-last} we know
    \begin{equation*}
        \frac{\alpha}{K} \sum_{k=1}^{K} \sum_{l=1}^{k} \inner{ \xdif^k }{  \xdif^l } 
            -  \frac{\alpha(K+1)}{2K^2} \norm{ \sum_{k=1}^{K} \xdif^k }^2
        = \frac{\alpha}{2K^2} \sum_{k=1}^{K} \sum_{l=1}^{k-1} \norm{ \xdif^k - \xdif^l }^2,
    \end{equation*}
    and observe that
    \begin{equation*}
        \begin{aligned}
        &\frac{\alpha}{K} \sum_{k=1}^{K} \norm{  \nabla h(x^k) }^2
        - \frac{\alpha}{K} \sum_{k=1}^{K} \inner{ \xdif^k }{  \nabla h(x^k) } - \frac{\alpha}{K^2} \norm{ \sum_{k=1}^{K} \nabla h(x^k) }^2
        + \frac{\alpha}{K^2} \sum_{k=1}^{K} \sum_{l=1}^{K} \inner{  \nabla h(x^k)}{  \xdif^l }  \\
        &= \frac{\alpha}{K^2} \pr{ \sum_{k=1}^{K} \sum_{l=1}^{K} \norm{  \nabla h(x^k) }^2 
        -\sum_{k=1}^{K} \sum_{l=1}^{K} \inner{  \nabla h(x^k) }{ \xdif^k } 
        - \sum_{k=1}^{K} \sum_{l=1}^{K} \inner{ \nabla h(x^k) }{ \nabla h(x^l) } 
        + \sum_{k=1}^{K} \sum_{l=1}^{K} \inner{  \nabla h(x^k)}{  \xdif^l } } \\
        &= \frac{\alpha}{K^2} \sum_{k=1}^{K} \sum_{l=1}^{K} \inner{\nabla h(x^k)}{\nabla h(x^k) - \xdif^k - \nabla h(x^l) + \xdif^l} \\
        &= \frac{\alpha}{2K^2} \sum_{k=1}^{K} \sum_{l=1}^{K} \inner{\nabla h(x^k)}{\nabla h(x^k) - \xdif^k - \nabla h(x^l) + \xdif^l} \\
        &\phantom{=}
        + \frac{\alpha}{2K^2} \sum_{l=1}^{K} \sum_{k=1}^{K}  \inner{\nabla h(x^l)}{\nabla h(x^l) - \xdif^l - \nabla h(x^k) +  \xdif^k} \\
        &= \frac{\alpha}{2K^2} \sum_{k=1}^{K} \sum_{l=1}^{K} \inner{\nabla h(x^k) - \nabla h(x^l)}{\nabla h(x^k) - \xdif^k - \nabla h(x^l) +  \xdif^l}  \\
        &= \frac{\alpha}{K^2} \sum_{k=1}^{K} \sum_{l=1}^{k-1} \inner{\nabla h(x^k) - \nabla h(x^l)}{\nabla h(x^k) - \xdif^k - \nabla h(x^l) +  \xdif^l}  .
        \end{aligned}
    \end{equation*}
    Combining with \eqref{eq:dys-fg-without-S2-4} gives
    \begin{equation*}
        \begin{aligned}
            &(\ineqs_f + \ineqs_g + \ineqs_h - \sos_h - \sos_1) - \lhs \\
            &= \frac{\alpha}{2K^2} \sum_{k=1}^{K} \sum_{l=1}^{k-1} \pr{ \norm{ \xdif^k - \xdif^l }^2 + 2\inner{\nabla h(x^k) - \nabla h(x^l)}{\nabla h(x^k) - \xdif^k - \nabla h(x^l) +  \xdif^l} } \\
            &= \frac{\alpha}{2K^2} \sum_{k=1}^{K} \sum_{l=1}^{k-1}  \pr{ \norm{ \xdif^k - \nabla h(x^k) - (\xdif^l - \nabla h(x^l)) }^2 + \norm{ \nabla h(x^k) - \nabla h(x^l) }^2   } 
            = \sos_2,
        \end{aligned}
    \end{equation*}
    which is the desired result.
\end{proof}

\begin{theorem}[Convergence of \eqref{eq:dys-fg}] \label{thm:dys-fg-conv}
    Suppose $f$, $g$, and $h$ are CCP and $h$ is $L$-smooth (with $L>0$). Suppose $\{(x^k,u^k)\}_{k \in \natint}$ is generated by \eqref{eq:dys-fg} with step size $\alpha = \frac{1}{L}$. Then, for all $K \in \natint_+$, the averaged iterates $(\xbar^K, \ubar^K)$ defined in \eqref{eq:ergodic} satisfy
    \begin{equation} \label{eq:dys-fg-conv-gap}
        \cL(\xbar^K,u) - \cL(x,\ubar^K) \le \frac{1}{\alpha (K+1)} \pr{ \|x^0 - x\|^2 + \alpha^2 \|u^0 - u\|^2 }
    \end{equation}
    for all $x \in \dom f$ and all $u \in \dom g^\ast$.
\end{theorem}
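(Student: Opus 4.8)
The plan is to deduce the bound directly from the exact identity established in \cref{prop:dys-fg-conv}, in exactly the same way that \cref{thm:drs-gf-conv} and \cref{thm:dys-gf-conv} follow from their respective propositions. By \cref{prop:dys-fg-conv} we have $\cL(\xbar^K,u)-\cL(x,\ubar^K)-\initdist{x}{u}/(K+1)=\ineqs_f+\ineqs_g+\ineqs_h-\sos_1-\sos_2-\sos_h$, so it suffices to show the right-hand side is nonpositive for all $x\in\dom f$, $u\in\dom g^\ast$, and $K\in\natint_+$. First I would dispatch $\ineqs_f\le 0$ and $\ineqs_g\le 0$: each is an average of terms of the form $f(a)-f(b)+\inner{\subg f(a)}{b-a}$ (and the analogous expressions in $g$ evaluated at the points $p^k$, $\pbar^K$, $p$ with the subgradients $u^k,u$), all of which are nonpositive by the subgradient inequality \eqref{eq:cvx} and the convexity of $f$ and $g$.

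The crux of the argument is $\ineqs_h\le 0$. I would observe that every summand in the four groups defining $\ineqs_h$ has precisely the shape $h(a)-h(b)+\inner{\nabla h(a)}{b-a}+\tfrac{\alpha}{2}\norm{\nabla h(a)-\nabla h(b)}^2$. Because the stepsize is fixed at $\alpha=1/L$, the coefficient $\alpha/2$ equals $1/(2L)$, so each such summand coincides exactly with the left-hand side of the smoothness inequality \eqref{eq:smth} for the appropriate pair $a,b$ drawn from $\xbar^K$, $x^k$, $x^0$, and $x$; by the convexity and $L$-smoothness of $h$, each bracket is therefore nonpositive. It then remains to confirm that the four weights multiplying these groups, namely $\tfrac{1}{K}$, $\tfrac{K-1}{K(K+1)}$, $\tfrac{2}{K+1}$, and $\tfrac{2}{K(K+1)}$, are all nonnegative for $K\in\natint_+$; since they are, $\ineqs_h$ is a nonnegatively weighted combination of nonpositive quantities, whence $\ineqs_h\le 0$. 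This is the single place where both the smoothness of $h$ and the specific choice $\alpha=1/L$ are used.

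Finally I would note that $\sos_1$, $\sos_2$, and $\sos_h$ are all nonnegative, being sums of squared Euclidean norms with nonnegative coefficients (in particular the weights $\tfrac{\alpha}{2}$ and $\tfrac{\alpha(K-1)}{2(K+1)}$ appearing in $\sos_h$ are nonnegative for $K\ge 1$). Combining the three sign facts gives $\ineqs_f+\ineqs_g+\ineqs_h-\sos_1-\sos_2-\sos_h\le 0$, and rearranging the identity of \cref{prop:dys-fg-conv} yields $\cL(\xbar^K,u)-\cL(x,\ubar^K)\le\initdist{x}{u}/(K+1)$, as claimed. The main obstacle is contained entirely in the verification of $\ineqs_h\le 0$: one must recognize that the unwieldy four-group expression for $\ineqs_h$ is engineered so that each bracket matches \eqref{eq:smth} precisely under $\alpha=1/L$, and then check the nonnegativity of its weights. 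Everything else reduces to the routine sign-checking already carried out in the DRS analyses.
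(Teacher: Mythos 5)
Your proposal is correct and follows essentially the same route as the paper: invoke the identity of \cref{prop:dys-fg-conv}, use convexity of $f$ and $g$ to get $\ineqs_f,\ineqs_g\le 0$, use convexity and $L$-smoothness of $h$ (via \eqref{eq:smth} with $\alpha=1/L$) to get $\ineqs_h\le 0$, and note that $\sos_h,\sos_1,\sos_2\ge 0$ as sums of squares. Your write-up is merely more explicit than the paper's in matching each bracket of $\ineqs_h$ to \eqref{eq:smth} and checking the nonnegativity of the four weights, which is a fine level of detail but not a different argument.
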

\begin{proof}
    It follows from the convexity of $f$ and $g$ that $\ineqs_f$ and $\ineqs_g$ are nonpositive and from the convexity and $L$-smoothness of $h$ that $\ineqs_h$ is nonpositive. Moreover, $\sos_h$, $\sos_1$ and $\sos_2$ are nonnegative since they are sum of squares.  Therefore, we conclude \eqref{eq:dys-fg-conv-gap} from \cref{prop:dys-fg-conv}.
\end{proof}
Recall that \eqref{eq:dys-fg} reduces to \eqref{eq:drs-fg} when $h = 0$. So, \eqref{eq:dys-fg} cannot exhibit a faster worst-case rate than \eqref{eq:drs-fg}. However, \cref{thm:dys-fg-conv} presents the same upper bound for \eqref{eq:dys-fg} as for \eqref{eq:drs-fg}. Combined with the tightness of our \eqref{eq:drs-fg} rate, we readily conclude that the $\cO(1/(K+1))$ ergodic rate in \cref{thm:dys-fg-conv} must be tight for \eqref{eq:dys-fg}. This argument is not evident prior to obtaining \cref{thm:dys-fg-conv}. So we present the tightness of \eqref{eq:dys-fg-conv-gap} as an corollary.
\begin{corollary}[Worst-case example for \eqref{eq:dys-fg}] \label{cor:dys-fg-worst-case}
    Under the same setting as in \cref{thm:dys-fg-conv}, for any $K \in \natint_+$ and any $\alpha > 0$, there exist \CCP\ functions $f$ and $g$, a $\tfrac{1}{\alpha}$-smooth convex function $h$, and points $x^0, u^0, x, u \in \reals^n$ such that $\|x^0 - x\|^2 + \alpha^2 \|u^0 - u\|^2 = 1$ and
    \begin{equation*} 
        \cL(\xbar^K,u) - \cL(x,\ubar^K) = \frac{1}{\alpha (K+1)} \pr{ \|x^0 - x\|^2 + \alpha^2 \|u^0 - u\|^2 }.
    \end{equation*}
\end{corollary}
\begin{proof}
    Note that \cref{prop:dys-fg-conv} reduces to \cref{prop:drs-fg-conv} when $h = 0$. Thus, following the same argument as in the proof of \cref{thm:drs-gf-worst-case}, we can show that the worst-case example for \eqref{eq:drs-fg} in \cref{thm:drs-gf-worst-case}, together with $h = 0$, also serves as a worst-case example for \eqref{eq:dys-fg}. 
\end{proof}

\subsection{Discussion: comparison between the two variants}

For \eqref{eq:dys-fg}, a worst-case example exists with $h=0$, and in fact, the same example serves both \eqref{eq:dys-fg} and its special case \eqref{eq:drs-fg}. This naturally arises the question of whether \eqref{eq:dys-gf} and \eqref{eq:drs-gf} can also share the same worst-case example, or whether a worst-case instance for \eqref{eq:dys-gf} can be constructed with $h=0$. The following proposition provides a negative answer: \eqref{eq:dys-gf} restores the $\cO(1/(K+1))$ rate as long as either $g=0$ or $h=0$.
\begin{proposition} \label{prop:dys-h=0}
    Suppose $\{(x^k,u^k)\}_{k \in \natint}$ is generated by \eqref{eq:dys-gf} with step size $\alpha > 0$. Denote $\xbar^K$, $\ubar^K$ as in \eqref{eq:ergodic}. For any integer $K \geq 1$, there do \emph{not} exist CCP functions $f$ and $g$, $\tfrac{1}{\alpha}$-smooth convex function $h$, and points $x^0,u^0,x,u \in \reals^n$ such that at least one of $g$ or $h$ vanishes, $\|x^0-x\|^2 + \alpha^2 \|u^0 - u\|^2 = 1$, and
    \begin{equation*}
        \cL(\xbar^K,u) - \cL(x,\ubar^K) > \frac{1}{\alpha (K+1)} \pr{ \|x^0 - x\|^2 + \alpha^2 \|u^0 - u\|^2 }.
    \end{equation*}
\end{proposition}
\begin{proof} 
    When $g=0$, $g^\ast = \delta_{\set{0}}$ and thus $\prox_{\alpha^{-1} g^\ast} (y) = 0$ for all $y \in \reals^n$. Then, the sequence $\{x^k\}_{k \in \natint}$ generated by \eqref{eq:dys-gf} reduces to a special case of \eqref{eq:dys-fg} with $g=0$. So, the desired conclusion follows from \cref{thm:dys-fg-conv}.

    When $h=0$, the sequence $\{(x^k,u^k)\}_{k \in \natint}$ generated by \eqref{eq:dys-gf} reduces to \eqref{eq:drs-gf}. Therefore we obtain the desired conclusion from \cref{thm:drs-gf-conv}.     
\end{proof}

\Cref{prop:dys-h=0} reveals that, in the worst case, the presence of the smooth term $h$ slows down the convergence of \eqref{eq:dys-gf}. In contrast, this phenomenon does not occur in \eqref{eq:dys-fg}. This asymmetry highlights a key distinction between the two variants of DYS: while \eqref{eq:dys-fg} remains robust to the inclusion of~$h$, the performance of \eqref{eq:dys-gf} is more sensitive to the smooth term. Consequently, care must be taken in selecting which formulation to use, especially when the smooth term $h$ plays a significant role in the objective.

\section{Conclusion} \label{sec:conclusion}

This paper presents novel convergence analyses for Douglas--Rachford splitting (DRS) and Davis--Yin splitting (DYS) and their variants obtained by swapping the roles of the two nonsmooth objective functions. For both variants of DRS and one variant of DYS, we establish exact worst-case convergence rates that include the constant factor and establish the tightness through worst-case examples. To the best of our knowledge, this is the first result that establishes the exact worst-case convergence rate for these algorithms including the constant factor. For the other variant of DYS, we establish the best-known convergence rate.

Surprisingly, we show that the swapped DYS algorithm (which we call DYS-$fg$) achieves a faster $\cO (1/(K+1))$ ergodic rate, compared to the standard $\cO(1/K)$ rate of the original DYS. These results are established under a unified primal--dual gap metric and illustrated via concrete examples.  In contrast, for Douglas--Rachford splitting (DRS), both the original and swapped versions exhibit the same convergence rates and nearly identical worst-case instances, an observation that emerges only through tight convergence analyses. This contrast highlights how the presence of a smooth term alters the algorithmic behavior under different update orders.

Our findings were only possible due to the precision of our convergence analysis and suggest that update order is not merely a structural nuance, but one that may affect algorithmic performance. Future work may extend our analysis to broader settings, including a composite extension of \eqref{eq:prob-primal} in which $g$ is replaced by $g \circ A$ for a linear operator $A$, as well as to DYS and DRS for monotone inclusion problems.


\bibliography{refs}

\end{document}